\pdfoutput=1
\RequirePackage{ifpdf}
\ifpdf 
\documentclass[pdftex]{sigma}
\else
\documentclass{sigma}
\fi

\usepackage{mathrsfs,bbm}

\numberwithin{equation}{section}

\newtheorem{Theorem}{Theorem}[section]
\newtheorem{Corollary}[Theorem]{Corollary}
\newtheorem{Lemma}[Theorem]{Lemma}
\newtheorem{Proposition}[Theorem]{Proposition}
 { \theoremstyle{definition}
\newtheorem{Definition}[Theorem]{Definition}

\newtheorem{Remark}[Theorem]{Remark}
}

\newcommand{\e}{\mathrm{e}} 
\newcommand{\N}{\mathbb{N}}
\newcommand{\R}{\mathbb{R}}
\newcommand{\Z}{\mathbb{Z}}
\newcommand{\C}{\mathbb{C}}

\renewcommand{\P}{\mathbb{P}}
\newcommand{\E}{\mathbb{E}}
\newcommand{\dd}{\mathrm{d}} 
\newcommand{\eps}{\varepsilon}
\newcommand{\la}{\langle}
\newcommand{\ra}{\rangle}

\newcommand{\be}{\begin{equation}}
\newcommand{\ee}{\end{equation}}

\newcommand{\vect}[1]{\boldsymbol{#1}}



\newcommand{\set}[1]{{\left \{ #1 \right \}}}

\newcommand{\one}{\mathbbm{1}}
\newcommand{\1}{\mathbbm{1}}

\begin{document}

\allowdisplaybreaks

\newcommand{\arXivNumber}{2311.08763}

\renewcommand{\PaperNumber}{046}

\FirstPageHeading

\ShortArticleName{Intertwinings for Continuum Particle Systems: an Algebraic Approach}

\ArticleName{Intertwinings for Continuum Particle Systems:\\ an Algebraic Approach}

\Author{Simone FLOREANI~$^{\rm a}$, Sabine JANSEN~$^{\rm bc}$ and Stefan WAGNER~$^{\rm bc}$}

\AuthorNameForHeading{S.~Floreani, S.~Jansen and S.~Wagner}

\Address{$^{\rm a)}$~Institute for Applied Mathematics, University of Bonn, Bonn, Germany}
\EmailD{\href{mailto:sflorean@uni-bonn.de}{sflorean@uni-bonn.de}}

\Address{$^{\rm b)}$~Mathematisches Institut, Ludwig-Maximilians-Universit\"at, 80333 M\"unchen, Germany}
\EmailD{\href{mailto:jansen@math.lmu.de}{jansen@math.lmu.de}, \href{mailto:swagner@math.lmu.de}{swagner@math.lmu.de}}

\Address{$^{\rm c)}$~Munich Center for Quantum Science and Technology (MCQST),\\
\hphantom{$^{\rm c)}$}~Schellingstr.~4, 80799 M\"unchen, Germany}

\ArticleDates{Received November 16, 2023, in final form May 22, 2024; Published online June 05, 2024}

\Abstract{We develop the algebraic approach to duality, more precisely to intertwinings, within the context of particle systems in general spaces, focusing on the $\mathfrak{su}(1,1)$ current algebra. We introduce raising, lowering, and neutral operators indexed by test functions and we use them to construct unitary operators, which act as self-intertwiners for some Markov processes having the Pascal process's law as a reversible measure. We show that such unitaries relate to generalized Meixner polynomials. Our primary results are continuum counterparts of results in the discrete setting obtained by Carinci, Franceschini, Giardin\`a, Groenevelt, and Redig (2019).}

\Keywords{algebraic approach to stochastic duality; intertwining; inclusion process; Lie algebra $\mathfrak{su}(1,1)$; orthogonal polynomials}

\Classification{60J25; 60K35; 82C22; 22E60}

\section{Introduction}

In recent years the algebraic approach to duality has been developed in the context of interacting particle systems (see, e.g., \cite{sturm2020algebraic} for a review). Stochastic duality is a tool that connects two Markov processes via an observable of both processes (the duality function): The relation then tells that the expected evolution of such observable with respect to one Markovian dynamics is equal to the expected evolution with respect to a second one, for any time and initial conditions. When the two processes have the same semigroup, we speak of self-duality. Duality becomes relevant when one of the two processes is easier to be studied and the duality function is a meaningful observable for the other process. In the particular case of self-duality, the simplification may arise by a simpler initial condition in the dual process.

 Stochastic duality has been used in various contexts, such as interacting particle systems (see, e.g., \cite{liggett_interacting_2005}), population genetics models (see, e.g., \cite{etheridige06}), (stochastic) partial differential equations (see, e.g., \cite{Mueller15}). However, finding duality relations is not easy. The algebraic approach provides
a structured way to find duality functions; for several processes, duality relations have been found only via the algebraic approach (see, e.g., \cite{carinci2016asymmetric,carinci3367generalized,kuan2016stochastic,kuan2018algebraic}).
So far, the algebraic approach to duality has been developed only in the case of particles hopping on discrete spaces.
The basic idea of the algebraic approach is as follows: Write the generator of the Markov process as a sum of single edge-generators describing the dynamics of particles among the edges of the underlying graph. Then, identify an underlying Lie algebra such that the single edge-generators are elements of the universal enveloping algebra in a given representation. One can then exploit the commutation relations of the Lie algebra to find symmetries of the generator.
This procedure was applied by Carinci, Franceschini, Giardin\`a, Groenevelt and Redig in the context of self-duality functions for independent random walkers, the symmetric exclusion process and the symmetric inclusion process. For the latter process, a self-duality relation in terms of Meixner polynomials was recovered, which first appeared in \cite{franceschini2019stochastic}.

In fact, the very notion of self-duality in the continuum is not trivial and only recently a~generalization for particles on a Polish space, where the concept of self-duality is replaced by the one of self-intertwining, has been developed by the authors together with F.~Redig.
In~\cite{IntertwiningConsistent}, we formulated self-intertwining relations with respect to infinite-dimensional orthogonal and falling factorial polynomials. In particular, we used the modern language of point processes, see, e.g.,~\cite{last2016}, identifying the particle system with an evolving counting measure and intriguing relations were found with the literature of chaos decomposition and extended Fock spaces in infinite dimensions (see, e.g., \cite{lytvynov2003JFA}).

In the present article, we extend the algebraic approach to the continuum, in the context of the~$\mathfrak{su}(1,1)$ algebra. To that aim we first introduce raising, lowering and neutral operators indexed by test functions rather than (lattice) sites and use them to define a family of unitary operators~${\rm U}(\xi,\theta)$, indexed by parameters $\xi\in \C$ and $\theta \in \R$, in a suitable $L^2$ space of finite counting measures (Section~\ref{sec:su11}). The reference measure $\rho_{p,\alpha}$ is the law of a \emph{Pascal point process} or \emph{negative binomial process} (Section~\ref{sec:pascaldef}). It is the continuum counterpart to the product of negative binomial laws, one for each lattice site, for interacting particle systems on a lattice. Families of operators indexed by functions are standard, e.g., for the canonical commutation relations for bosons in quantum many-body mechanics or quantum field theory \cite[Section~X.7]{reedSimonII}; they also appear in connection with current algebras and quantum probability \cite{accardi2009quantum,araki1969}. We leave a~thorough analysis of the algebraic setting to a companion article \cite{algebraic_Floreani} but mention already that our raising and lowering operators are closely related to operators studied for infinite-dimensional orthogonal polynomials \cite{lytvynov2003JFA} and to representations of the algebra of the square of white noise, the current algebra of $\mathfrak{sl}(2,\R)$,
and the finite difference algebra \cite{accardi2002renormalized,boukas1991,sniady2000}.

We prove that the unitaries ${\rm U}(\xi,\theta)$ belong to the symmetry group of consistent Markov processes that admit the law of the Pascal process as a reversible measure (Theorem~\ref{thm:symmetry}). Put differently, each of these unitaries is a self-intertwiner for the semigroup: ${\rm U}(\xi,\theta) P_t = P_t {\rm U}(\xi,\theta)$. \emph{Consistency} roughly means that random removal of a particle and time evolution commute \cite{carinci2021consistent}. Theorem~\ref{thm:symmetry} applies in particular to the generalized symmetric inclusion process and thereby generalizes to the continuum item~1\,(i) in \cite[Theorem~3.1]{carinci2019orthogonal}.

For a concrete choice of the parameters $\xi$ and $\theta$, the unitary ${\rm U}(\xi,\theta)$ maps functions supported on $n$-particle configurations to generalized Meixner polynomials of degree~$n$ (Theorem~\ref{thm:unitary-meixner}), up to proportionality constants. This result generalizes item~1\,(ii) in \cite[Theorem~3.1]{carinci2019orthogonal}.
As a by-product to Theorems~\ref{thm:symmetry} and~\ref{thm:unitary-meixner}, we obtain a new algebraic proof of a self-intertwining relation proven by F.~Redig and us in \cite{IntertwiningConsistent}, see Corollary~\ref{cor:meixner-intertwining}. Our results are complemented by some considerations on how to rewrite infinitesimal generators with our raising, lowering and neutral operators (Section~\ref{sec:alg-gen}).

We treat only systems with finitely many particles. However, let us briefly remark on Markov processes with infinitely many particles. Usually the duality is between on the one hand a process with possibly infinitely many particles and on the other hand the same process but with finitely many particles \cite[Section~III.4]{liggett_interacting_2005}. Accordingly our unitary self-intertwining is replaced with a~unitary operator from a Hilbert space for \emph{finitely} many particles onto a Hilbert space for infinitely many particles. The first Hilbert space may be chosen as an extended Fock space, the second Hilbert space is still an~$L^2$ space with respect to a Pascal law with infinite mass $\alpha(E) = \infty$, and the unitary operator is the isomorphism from chaos decompositions for the Pascal point process \cite[Corollary~5.3]{Lytvynov2003}. Such an intertwining relation is proven for sticky Brownian motions in \cite[Theorem~3.5]{wagner2023infinitely-many}.

We should emphasize that our method also applies to consistent Markov processes that have a Poisson law as a reversible measure instead of a Pascal law. The relevant algebra is the Heisenberg algebra, the raising, lowering and neutral operators are replaced with creation, annihilation and number operators, and Charlier polynomials take the place of Meixner polynomials. The outcome is a generalization of item~3 in Theorem~3.1 from Carinci et al.~\cite{carinci2019orthogonal}. Creation and annihilation operators in the continuum are well known in the context of many-body quantum mechanics~\cite{reedSimonII}; this is why we focus our presentation on~$\mathfrak{su}(1,1)$.

In contrast, our method does not apply to the ${\rm SU}(2)$ symmetry and Krawtchouk polynomials relevant for exclusion processes \cite{carinci2019orthogonal}. We use a reference measure on configurations with respect to which raising and lowering operators are dual, and that reference measure should be infinitely divisible. Poisson and negative binomial laws are infinitely divisible and thus have natural L{\'e}vy processes or fields as continuum counterparts. Bernoulli and binomial laws are not infinitely divisible and it is not clear what the associated continuum random field should be.

The article is organized as follows. We introduce the space of finite counting measures, the Pascal law, and the lowering, raising and neutral operators and then we present our main results. All this is done in Section~\ref{sec:setting-results}. In Section~\ref{sec:pascal}, we give a Papangelou kernel for the Pascal point process and apply it to prove that raising and lowering operators are adjoint to each other. Theorems~\ref{thm:symmetry} and~\ref{thm:unitary-meixner} are proven in Sections~\ref{sec:symmetry} and~\ref{sec:generating}, respectively. Finally, Appendix~\ref{app:meixner} gathers some known formulas on univariate Meixner polynomials.

\section{Setting and main results} \label{sec:setting-results}

In the following, Cartesian products are always equipped with product $\sigma$-algebras. For finite interacting particle systems on lattices, the configuration space is $\N_0^\Lambda = \set{(n_x)_{x\in \Lambda}\colon n_x \in \N_0}$ with $\Lambda\subset\Z^d$. We replace $\N_0^\Lambda$ with a space of finite counting measures:
Let $(E,\mathcal E)$ be a Borel space, for example, $E= \R^d$ or more generally a Polish space with its Borel $\sigma$-algebra. Let~$\mathbf N_{<\infty}$ be the space of finite counting measures on $E$. Every element $\eta \in \mathbf N_{<\infty}$ is either zero or a finite sum $\eta= \delta_{x_1}+\cdots + \delta_{x_n}$ of Dirac measures, where $x_1, \ldots, x_n \in E$, $n \in \N$. The space $\mathbf N_{<\infty}$ is equipped with the $\sigma$-algebra $\mathcal N_{<\infty}$ generated by the counting variables $\eta \mapsto \eta(B)$, $B\in \mathcal E$.
For background on point processes and counting measures, we refer the reader to Last and Penrose~\cite{LastPenroseLecturesOnThePoissonProcess}.

\subsection{Symmetric inclusion process}

We are interested in Markov processes with state space $\mathbf N_{<\infty}$. As a guiding example we take the process with formal generator
\begin{equation} \label{eq:gsip}
	L f(\eta) = \iint \bigl( f(\eta-\delta_x+\delta_y)- f(\eta)\bigr) c(x,y) \eta(\dd x) \bigl( \alpha+\eta\bigr) (\dd y),
\end{equation}
where $\alpha$ is a finite measure on $E$ and $c\colon E\times E\to \R_+$ is a bounded measurable function with $c(x,y) = c(y,x)$ on $E^2$. In \cite{IntertwiningConsistent} we called this process \emph{generalized inclusion process}. When $E$ is a~finite set, we may identify finite counting measures $\eta$ with vectors $\vect n = (n_x)_{x\in \Lambda}\in \N_0^\Lambda$, integrals over $E$ turn into sums, and the generator turns into
\begin{equation} \label{eq:sip}
	Lf(\vect n) = \sum_{x,y\in E} \bigl( f(\vect n- \vect e_x+\vect e_y) - f(\vect n)\bigr) c(x,y) n_x (\alpha_y+n_y),
\end{equation}
where $\vect e_x(y) = \delta_{x,y}$ and $\alpha_y = \alpha(\{y\})$. This is the generator of the inhomogeneous \emph{symmetric inclusion process} (see, e.g., \cite[equation~(2.2)]{floreani2020orthogonal}), which first appeared in the homogeneous case as a dual process to a model for energy and momentum transport, see \cite{giardina-redig-vafayi2010} and references therein.

When $c(x,y)$ is the constant function with value $1$, the process corresponds to the Moran process from mathematical population genetics--classical Moran model \cite{TransitionMoran,Moran} when $E$ is finite, measure-valued Moran model (see \cite[Section 2.6]{DawsonMeasureValuedMarkovProcesses} or \cite[Section~5.4]{Etheridge00}) when $E$ is uncountable. The Moran model describes a population in which individuals have genetic types $x\in E$ that may mutate or evolve by sampling replacement (death of an individual followed by replacement with the offspring of another individual).

For the homogeneous symmetric inclusion process with finite state space / Moran model with finite type space, the correspondence was already noticed in \cite[Section~5]{carinci2015dualities}. For uncountable state spaces, we notice that the $n$-particle dynamics of the process generated by \eqref{eq:gsip} (with $c(x,y) \equiv 1$) coincides with the $n$-particle Moran model \cite[equation~(2.5.2)]{DawsonMeasureValuedMarkovProcesses} with so-called \emph{mutation operator} $A\varphi(x) = \int( \varphi(y) - \varphi(x)) \alpha(\mathrm d x)$. Our process for counting measures is similar to the measure-valued Moran model. The only difference is in bookkeeping: In mathematical population genetics, the population is often modeled not with $\eta \in \mathbf N_{<\infty}$ but rather with the empirical measure $\frac{\eta}{\eta(E)}$. This is helpful for scaling limits in which the population size goes to infinity, notably Fleming--Viot limits, which are important in population genetics (see \cite{shigaDiffusions} and references therein).

\subsection{Pascal point process} \label{sec:pascaldef}
The symmetric inclusion process with generator~\eqref{eq:gsip} has a family of reversible measures $\rho_{p,\alpha}$ indexed by $p\in (0,1)$ \cite[Theorem 5.2]{IntertwiningConsistent} where $\rho_{p,\alpha}$ is the law of the Pascal point process or negative binomial process (see, e.g., \cite[Section~2.7]{serfozo1990point}, \cite{kozubowski2008distributional} and \cite[Section~5.2]{IntertwiningConsistent}). We recall the definition here.
For $a\in \R$ and $n\in \N$, the rising factorial (also known as Pochhammer symbol)~is
\[
	(a)_0 = 1,\qquad (a)_n = a(a+1)\cdots (a+n-1).
\]

\begin{Definition}
 Let $p\in (0,1)$ and $\alpha$ be a finite measure on $E$. $\rho_{p,\alpha}$ is the uniquely defined probability measure on $\mathbf N_{<\infty}$ such that:
 \begin{itemize}\itemsep=0pt
 	\item For all $B\in \mathcal E$, and $n\in \N_0$
 	\begin{equation} \label{eq:negbin}
 		\rho_{p,\alpha}(\{\eta\colon \eta(B) =n\}) = (1-p)^{\alpha(B)} \frac{p^n}{n!} (\alpha(B))_{n}.
 	\end{equation}
 	\item For all $n \geq 2$ and all disjoint $B_1,\ldots, B_n\in \mathcal E$, the variables $\eta\mapsto \eta(B_i)$, $i=1,\ldots, n$, are independent.
 \end{itemize}
\end{Definition}

We work in the Hilbert space $\mathcal H = L^2(\mathbf N_{<\infty}, \mathcal N_{<\infty}, \rho_{p,\alpha})$ of complex-valued square-integrable functions. The scalar product is
\[
	\langle f,g \rangle = \int \overline{f} g\, \dd\rho_{p,\alpha}.
\]

\subsection[Representation of the su(1,1)]{Representation of the $\boldsymbol{\mathfrak{su}(1,1)}$ current algebra} \label{sec:su11}

Let $\mathcal D\subset \mathcal H$ be the set of bounded measurable functions $F$ for which there exists a cutoff $ m_F \in \N$ such that $F$ is supported in $\{\eta\colon \eta(E)\leq m_F\}$. We define operators $k^\pm(\varphi),k^0(\varphi)\colon\mathcal D\to \mathcal D$ indexed by bounded measurable functions $\varphi\colon E\to \C$ as follows:
\begin{align*}
	&k^+(\varphi) F(\eta) = \frac{1}{\sqrt p} \int \varphi(x) F(\eta-\delta_x) \eta(\dd x),\\
 &k^-(\varphi) F(\eta) = \sqrt{p} \int \overline{\varphi(x)} F(\eta+\delta_x) \bigl(\alpha+\eta)\bigr(\dd x),\\
	&k^0(\varphi) F(\eta) = F(\eta) \biggl( \int \varphi\,\dd \eta+ \frac12 \int \varphi \,\dd \alpha \biggr).
\end{align*}
for $\eta \in \mathbf{N}_{<\infty}$.
The raising operator $k^+(\varphi)$ and the lowering operator $k^-(\varphi)$ map functions supported in $\{\eta\colon \eta(E) = n\}$ to functions supported in $\{\eta\colon \eta(E) =n+1\}$ and $\{\eta\colon \eta(E) =n-1\}$ respectively. The indicator $\1_{\{0\}}$ that there is no particle at all (\emph{vacuum}) is annihilated by all lowering operators: $k^-(\varphi) \1_{\{0\}}=0$.

Our operators are a representation of the $\mathfrak{su}(1,1)$ current algebra, i.e., they represent the matrix-valued maps $x\mapsto \varphi(x) k^+$, $x\mapsto \overline{\varphi(x)} k^-$, $x\mapsto \theta(x) k^0$ with
\begin{align*}
 	k^+ = \begin{pmatrix} 0 & \mathrm i \\ 0 & 0 \end{pmatrix},\qquad
	k^- = \begin{pmatrix} 0 & 0 \\ \mathrm i & 0 \end{pmatrix},\qquad
	k^0 = \begin{pmatrix} 1/2 & \hphantom{-}0 \\ 0 & -1/2 \end{pmatrix},
\end{align*}
see \cite[Section~2.2]{algebraic_Floreani}.
The following commutation relations can be checked by an explicit computation:
\begin{gather*}
	\bigl[k^-(\varphi), k^+(\theta)\bigr] = 2 k^0(\overline{\varphi}\theta),\qquad
\bigl[k^0(\theta),k^+(\varphi)\bigr] = k^+(\varphi\theta),\qquad [k^0(\theta),k^-(\varphi)] = - k^-(\overline{\varphi}\theta),
\end{gather*}
where $[T, S] := TS - ST$.
Indeed, it can be readily verified that
\begin{gather*}
 k^-(\varphi) k^+(\theta) F(\eta) = \iint \overline{\varphi(y)} \theta(x) F(\eta - \delta_x + \delta_y) \eta(\dd x) (\alpha + \eta)(\dd y) \\
 \hphantom{k^-(\varphi) k^+(\theta) F(\eta) = \iint}{}
 + F(\eta) \int \overline{\varphi(y)} \theta(y) (\alpha + \eta)(\dd y), \\
 k^+(\theta) k^-(\varphi) F(\eta) = \iint \overline{\varphi(y)} \theta(x) F(\eta - \delta_x + \delta_y) \eta(\dd x) (\alpha + \eta)(\dd y) \\
 \hphantom{k^+(\theta) k^-(\varphi) F(\eta) =\iint}{}
 + F(\eta) \int \overline{\varphi(x)} \theta(x) \eta(\dd y).
\end{gather*}
Subtracting these equations yields the commutation relation
\[
\bigl[k^-(\varphi), k^+(\theta)\bigr] F(\eta) = F(\eta) \int \overline{\varphi(x)} \theta(x) (\alpha + 2\eta)(\dd x) = 2k^0(\overline{\varphi}\theta) F(\eta).
\]
 The other relations follow similarly.

The factor $\sqrt p$ included in the definitions of $k^\pm(\varphi)$ is irrelevant for the commutation relations but it matters for the adjointness relation $\la f, k^+(\varphi) g\ra = \la k^-(\varphi) f,g\ra$ proven in Lemma~\ref{lem:adjoints}.

We leave a thorough analysis of the current algebra generated by the operators $k^\pm(\varphi)$, $k^0(\varphi)$ to another article \cite{algebraic_Floreani} and focus on the operators associated with the constant function $\varphi = \1$, equal to $1$ on all of $E$. In Lemma~\ref{lem:closure} below, we check that for every $\xi \in \C$, the operator $\frac{1}{\mathrm i} \bigl( \xi k^+(\1) - \overline{\xi} k^-(\1))$ with domain $\mathcal D$ is closable with self-adjoint closure $A$. The operator $\exp( \xi k^+(\1) - \overline{\xi} k^-(\1))$ is, by definition, the unitary $\exp(\mathrm i A)$.
For $\theta \in \R$, the unitary $\exp\bigl( 2\mathrm i \theta k^0(\1)\bigr)$ is defined as a multiplication operator, it multiplies a function $f(\eta)$ with $\exp(\mathrm i \theta (\alpha (E) + 2 \eta(E)))$.
In this way we obtain a family of unitary operators
\begin{equation}\label{eq:unitary}
	{\rm U}(\xi,\theta) = \exp\bigl( \xi k^+(\1) - \overline{\xi} k^-(\1)\bigr) \exp\bigl(2\mathrm i\theta k^0(\1)\bigr),\qquad \xi \in \C,\theta\in \R.
\end{equation}
For the definition of the unitary~\eqref{eq:unitary}, it is essential that $\alpha$ has finite total mass and that each configuration $\eta$ has finitely many particles.

\subsection{Symmetries of consistent particle processes}

Let $(P_t)_{t\geq 0}$ be the semigroup of a Markov process with state space $\mathbf N_{<\infty}$. For a measurable $f\colon \mathbf N_{<\infty} \to \R_+$, let $\mathcal A f\colon \mathbf N_{<\infty} \to \R_+$ be the function given by
\[
	\mathcal A f(\eta) = \int f(\eta-\delta_x)\eta(\dd x).
\]

\begin{Definition}
 The semigroup $(P_t)_{t\geq 0}$ is \emph{consistent} if $P_t \mathcal A f = \mathcal A P_t f$ for all measurable $f\colon \mathbf N_{<\infty}\to \R_+$ and all $t\geq 0$.
\end{Definition}

Notice that, up to questions of domains, $\mathcal A = \sqrt p\, k^+(\1)$. In Proposition~\ref{proposition: consistency implies conservative} below, we show that if a process is consistent then it is \emph{conservative}, i.e., the total number of particles is constant in time. In view of that, we could normalize $\mathcal A$ by the total mass $\eta(E)$ (which is constant in time) and read consistency as the property that the action of removing a particle uniformly at random commutes with the dynamics \cite{carinci2021consistent,IntertwiningConsistent}.

The following theorem generalizes item 1\,(i) in Theorem~3.1 in Carinci et al.~\cite{carinci2019orthogonal}.

\begin{Theorem} \label{thm:symmetry}
Let $(P_t)_{t\geq 0}$ be the semigroup of a Markov process with state space $\mathbf N_{<\infty}$. Let $p\in (0,1)$ and $\alpha$ be a finite measure on $E$. Assume that the process is consistent, and admits the Pascal law $\rho_{p,\alpha}$ as a reversible measure. Then $P_t$ commutes with all unitaries ${\rm U}(\xi,\theta)$ from~\eqref{eq:unitary}:
	\[
		P_t {\rm U}(\xi,\theta) f(\eta) = {\rm U}(\xi,\theta) P_t f (\eta)
	\]
	for all $t\geq 0$, $f\in L^2(\mathbf N_{<\infty},\mathcal N_{<\infty},\rho_{p,\alpha})$ and $\rho_{p,\alpha}$-almost all $\eta\in \mathbf N_{<\infty}$.
\end{Theorem}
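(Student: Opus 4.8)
The plan is to factorize ${\rm U}(\xi,\theta) = \exp\bigl(\xi k^+(\1) - \overline{\xi} k^-(\1)\bigr)\exp\bigl(2\mathrm i\theta k^0(\1)\bigr) = \exp(\mathrm i A)\, M_\theta$, where $A$ is the self-adjoint closure from Lemma~\ref{lem:closure} and $M_\theta$ is multiplication by $\exp\bigl(\mathrm i\theta(\alpha(E)+2\eta(E))\bigr)$, and then to show that $P_t$ commutes with each factor separately. The reversibility hypothesis makes $P_t$ a self-adjoint contraction on $\mathcal H$. The three hypotheses feed in as follows: consistency controls $k^+(\1)$, reversibility together with the adjointness relation of Lemma~\ref{lem:adjoints} transfers this to $k^-(\1)$, and conservativeness (Proposition~\ref{proposition: consistency implies conservative}) handles $M_\theta$.

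For the first factor I would work on the core $\mathcal D$. Since $\mathcal A = \sqrt p\, k^+(\1)$, the consistency identity $P_t \mathcal A f = \mathcal A P_t f$ is exactly $P_t k^+(\1) = k^+(\1) P_t$. Lemma~\ref{lem:adjoints} gives $k^-(\1) = \bigl(k^+(\1)\bigr)^*$ on $\mathcal D$, and combining this with $P_t = P_t^*$ yields
\[
\langle f, k^-(\1) P_t g\rangle = \langle k^+(\1) f, P_t g\rangle = \langle P_t k^+(\1) f, g\rangle = \langle k^+(\1) P_t f, g\rangle = \langle P_t f, k^-(\1) g\rangle = \langle f, P_t k^-(\1) g\rangle,
\]
so that $P_t k^-(\1) = k^-(\1) P_t$ as well. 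Hence $P_t$ commutes with $A$, equivalently with $\mathrm i A = \xi k^+(\1) - \overline{\xi} k^-(\1)$, on $\mathcal D$.

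The hard part will be upgrading this core-level commutation to the statement $P_t \exp(\mathrm i A) = \exp(\mathrm i A) P_t$ for the bounded unitary, which is a matter of domains and functional calculus rather than algebra. I would exploit the grading $\mathcal H = \bigoplus_n \mathcal H_n$ by particle number $\eta(E) = n$: conservativeness makes $P_t$ block diagonal, since it preserves each $\mathcal H_n$, whereas $k^\pm(\1)$ are the shifts $\mathcal H_n \to \mathcal H_{n\pm 1}$, so $A$ is block tridiagonal. The cleanest route is a resolvent argument: read on $L^2$, the consistency identity shows that $P_t$ maps the domain of the closure $\overline{k^+(\1)}$ into itself and commutes with it there, the adjoint relation gives the same for $\overline{k^-(\1)}$, one checks $A P_t = P_t A$ on $\mathcal D(A)$, deduces $P_t (A\mp\mathrm i)^{-1} = (A\mp\mathrm i)^{-1} P_t$, and concludes that $P_t$ lies in the commutant of the von Neumann algebra generated by $A$ and hence commutes with $\exp(\mathrm i A)$. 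Alternatively, if one verifies that the elements of $\mathcal D$ are analytic vectors for $A$, so that $\exp(\mathrm i A)F = \sum_k (\mathrm i A)^k F / k!$ converges, commuting term by term with $P_t$ gives the claim directly; this requires factorial growth bounds on $\|A^k F\|$, which I expect to be the most delicate estimate.

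For the second factor, $M_\theta$ is a function of $\eta(E)$ alone, so on each sector $\mathcal H_n$ it acts as the scalar $\exp\bigl(\mathrm i\theta(\alpha(E)+2n)\bigr)$; since $P_t$ preserves $\mathcal H_n$ by conservativeness, it commutes with $M_\theta$. Combining the two commutations gives $P_t {\rm U}(\xi,\theta) = {\rm U}(\xi,\theta) P_t$ on a dense set, and then, by boundedness of both operators, on all of $L^2(\mathbf N_{<\infty},\mathcal N_{<\infty},\rho_{p,\alpha})$.
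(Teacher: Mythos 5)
Your proposal is correct and follows essentially the same route as the paper: factor ${\rm U}(\xi,\theta)$ into $\exp(\mathrm i A)\,M_\theta$, dispatch the multiplication factor $M_\theta$ via conservativeness (Proposition~\ref{proposition: consistency implies conservative}), transfer the consistency commutation from $k^+(\1)$ to $k^-(\1)$ using reversibility and Lemma~\ref{lem:adjoints}, and upgrade the core-level commutation with $A$ to the unitary via Lemma~\ref{lem:closure} and functional calculus (the paper passes through $P_tg\in\mathcal D(A^*)=\mathcal D(A)$ and commuting spectral projections where you use resolvents and the commutant, an immaterial difference). The ``delicate'' factorial-growth estimate on $\|A^k f\|$ that you flag in your alternative analytic-vector route is precisely what the proof of Lemma~\ref{lem:closure} already establishes, so no extra work is needed there.
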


The generalized symmetric inclusion process is a consistent Markov process and admits the Pascal law $\rho_{p,\alpha}$ as a reversible measure (as proved in \cite[Theorem~5.2.]{IntertwiningConsistent}) and, thus, Theorem~\ref{thm:symmetry} applies.

\subsection{Intertwining with Meixner polynomials}
Let $\mathcal P_n$ be the $L^2$-closure of the space of polynomials of degree at most $n$ in the counting variables $\eta(B)$. Thus $\mathcal P_n$ is the closed linear hull of monomials $\eta\mapsto \eta(B_1)^{n_1}\cdots \eta(B_\ell)^{n_\ell}$ with $n_1+\cdots + n_\ell \leq n$ and $B_1,\ldots, B_\ell \in \mathcal E$. For $f_n\colon E^n\to \C$ a bounded measurable function, let $I_n(f_n)\in L^2(\mathbf N_{<\infty},\mathcal N_{<\infty}, \rho_{p,\alpha})$ be given by
\begin{align*}
 I_n(f_n) := f - \text{orthogonal projection of } f \text{ onto } \mathcal P_{n-1},
\end{align*}
where $f(\eta) := \int f_n \dd \eta^{\otimes n}$.
The orthogonalized version of
\[
	\eta(B_1)^{n_1}\cdots \eta(B_\ell)^{n_\ell}
\]
with disjoint $B_1,\ldots, B_\ell$ and $n_1+\cdots+n_\ell =n$, is a product of univariate Meixner polynomials with leading coefficient $1$:
\begin{equation*} 
	I_n\bigl( \1_{B_1}^{\otimes n_1}\otimes \cdots \otimes \1_{B_\ell}^{\otimes n_\ell}\bigr) (\eta) = \prod_{i=1}^\ell \mathcal M_{n_i}(\eta(B_i);\alpha(B_i),p)
\end{equation*}
for $\rho_{p,\alpha}$-almost all $\eta$. See, e.g., Lytvynov \cite[Lemma 3.1]{lytvynov2003JFA} or \cite[Proposition~5.3]{IntertwiningConsistent}.
The definition of univariate Meixner polynomials is recalled in Appendix~\ref{app:meixner}. The letter $I_n$ reflects an analogy with Poisson--Charlier polynomials and multiple stochastic integrals with respect to compensated Poisson random measures \cite[Section 12]{LastPenroseLecturesOnThePoissonProcess}.

Our second main theorem generalizes item 1\,(ii) in Theorem 3.1 in Carinci et al.~\cite{carinci2019orthogonal}.
For $f\colon \mathbf N_{<\infty}\to \C$ we define a sequence of functions $(f_n)_{n\in \N}$ by
\[
	 f_n(x_1,\ldots, x_n) = f(\delta_{x_1}+\cdots + \delta_{x_n} \bigr).
\]
Each function $f_n$ is \emph{symmetric}, i.e., its value stays the same when the variables are permuted. The inverse of the hyperbolic tangent is denoted $\mathrm{artanh}$.

\begin{Theorem} \label{thm:unitary-meixner}
	Let $\xi= \mathrm{artanh}\, \sqrt p$ and $U= \exp\bigl(\xi k^+(\1) - \xi k^-(\1)\bigr)$. Then for all $n\in \N$ and all bounded measurable $f\colon \mathbf N_{<\infty}\to \C$ supported in
$\{\eta\colon \eta(E)=n\}$, we have
	\[
		 U f = (1-p)^{\alpha(E)/2} \, \frac{1}{n!}\, (1-p)^n I_n(f_n).
	\]
	Furthermore, $ U \1_{\{0\}} = (1-p)^{\alpha(E)/2} \, \1$.
\end{Theorem}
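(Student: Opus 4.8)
The plan is to reduce the unitary $U$ to a normally ordered (Gauss-decomposed) product and then identify the image of an $n$-particle function by an orthogonality argument rather than by a direct generating-function computation. First I would establish the disentangling formula
\[
 U = \exp\bigl(\sqrt p\, k^+(\1)\bigr)\,\exp\bigl(\ln(1-p)\,k^0(\1)\bigr)\,\exp\bigl(-\sqrt p\, k^-(\1)\bigr),
\]
in which the middle factor is multiplication by $(1-p)^{\eta(E)+\alpha(E)/2}$. This is read off from the $2\times 2$ matrix representation: with $\tanh\xi=\sqrt p$ one has $\cosh\xi=(1-p)^{-1/2}$, and the Gauss decomposition of $\exp(\xi(k^+-k^-))$ in $\mathrm{SU}(1,1)$ has off-diagonal parameters $\tanh\xi$ and $-\tanh\xi$ and diagonal parameter $-2\ln\cosh\xi=\ln(1-p)$. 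Since $k^\pm(\1),k^0(\1)$ represent this Lie algebra, the same factorization should hold on $\mathcal D$; I would make this precise by checking that both sides, applied to $F\in\mathcal D$, solve the same linear differential equation in $\xi$, using that on $\mathcal D$ the lowering exponential terminates and the raising exponential produces a polynomial in the counting variables, hence an element of $\mathcal H$ (the Pascal law has all moments finite).

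Next I would record the action of the raising exponential: for $h$ supported on $m$-particle configurations,
\[
 \exp\bigl(\sqrt p\,k^+(\1)\bigr)h(\eta)=\sum_{\zeta\le\eta,\ \zeta(E)=m}h(\zeta)=\frac{1}{m!}\int h_m\,\dd\eta^{(m)},
\]
the integral against the $m$-th factorial measure, which is a polynomial of degree $m$ and therefore lies in $\mathcal P_m$. Applying the factorization to the vacuum, $k^-(\1)\1_{\{0\}}=0$ gives $\exp(-\sqrt p\,k^-(\1))\1_{\{0\}}=\1_{\{0\}}$, the middle factor multiplies by $(1-p)^{\alpha(E)/2}$, and $\exp(\sqrt p\,k^+(\1))\1_{\{0\}}=\1$; this settles the second assertion and the base case $n=0$.

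For the main identity I would induct on $n$. Applying the factorization to an $n$-particle $f$, expanding $\exp(-\sqrt p\,k^-(\1))f=\sum_{j=0}^n\frac{(-\sqrt p)^j}{j!}\bigl(k^-(\1)\bigr)^j f$ into $(n-j)$-particle functions, inserting the eigenvalue $(1-p)^{(n-j)+\alpha(E)/2}$ of the middle factor, and using the previous paragraph, one obtains $Uf\in\mathcal P_n$ with
\[
 Uf=(1-p)^{\alpha(E)/2}\sum_{j=0}^{n}\frac{(-\sqrt p)^j}{j!}(1-p)^{\,n-j}\exp\bigl(\sqrt p\,k^+(\1)\bigr)\bigl(k^-(\1)\bigr)^j f,
\]
where only $j=0$ contributes a term outside $\mathcal P_{n-1}$. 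Since $\int f_n\,\dd\eta^{(n)}$ and $\int f_n\,\dd\eta^{\otimes n}$ agree modulo $\mathcal P_{n-1}$, the component of $Uf$ in $\mathcal P_n\cap\mathcal P_{n-1}^\perp$ equals $(1-p)^{\alpha(E)/2}\frac{(1-p)^n}{n!}I_n(f_n)$. It remains to show $Uf\perp\mathcal P_{n-1}$, for which I would use unitarity: for $g$ supported on $m$-particle configurations with $m<n$ one has $\langle Uf,Ug\rangle=\langle f,g\rangle=0$, because functions with different total particle numbers are $\rho_{p,\alpha}$-orthogonal; by the induction hypothesis, as $g$ ranges over bounded $m$-particle functions $Ug$ exhausts a dense subspace of $\mathcal P_m\cap\mathcal P_{m-1}^\perp$, so $Uf$ is orthogonal to each such subspace with $m<n$, hence to $\mathcal P_{n-1}$. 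Thus $Uf$ coincides with its $\mathcal P_n\cap\mathcal P_{n-1}^\perp$-component, which is the asserted expression.

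The main obstacle is the rigorous justification of the disentangling formula at the level of unbounded operators: the raising exponential does not preserve $\mathcal D$, so I must verify both that it lands in $\mathcal H$ and that the factorized product genuinely coincides with the unitary closure $U=\exp(\mathrm i A)$ from Lemma~\ref{lem:closure}, rather than merely agreeing formally. The second delicate point is the bookkeeping that converts the $\eta^{\otimes n}$ normalization in the definition of $I_n(f_n)$ into the factorial-measure normalization produced by $\exp(\sqrt p\,k^+(\1))$, since this is precisely what pins down the constants $\frac{1}{n!}$ and $(1-p)^n$.
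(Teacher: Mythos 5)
Your proposal is correct in substance but takes a genuinely different route from the paper. The paper never disentangles the unitary: it computes the action of $\exp\bigl(\xi k^+(\1)-\overline\xi k^-(\1)\bigr)$ on exponential states $\mathcal E_z=\exp\bigl(k^+(z)\bigr)\1_{\{0\}}$ (Proposition~\ref{prop:expo-trafo}), showing by a norm-differentiability/Stone's-theorem argument that the coherent parameter $z$ transforms under the M\"obius action of ${\rm SU}(1,1)$ with a Bargmann-type multiplier $C(\xi)$; it then specializes to $z=s\sqrt p\,\1_B$, recognizes in $Uf_s$ the generating function of monic Meixner polynomials and extracts coefficients (Proposition~\ref{prop:meixner}), tensorizes over a set partition $\{B_0,\dots,B_\ell\}$ (Lemma~\ref{lem:tensorize}), and finally passes from product indicators to all bounded measurable $f$ by a $\pi$-system/monotone class argument. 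You instead apply the ${\rm SU}(1,1)$ Gauss decomposition of $U$ directly to an $n$-particle function and pin down $Uf$ by a projection argument: the $j\geq 1$ terms land in $\mathcal P_{n-1}$, the $j=0$ term gives $\frac{1}{n!}\int f_n\,\dd\eta^{(n)}$, which agrees with $\int f_n\,\dd\eta^{\otimes n}$ modulo $\mathcal P_{n-1}$, and orthogonality $Uf\perp\mathcal P_{n-1}$ comes from unitarity plus induction (the images $Ug=c_mI_m(g_m)$, $m<n$, are total in $\mathcal P_m\cap\mathcal P_{m-1}^\perp$ because $I_m$ of product indicators are projections of the spanning monomials). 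This bypasses Meixner generating functions and the tensorization lemma entirely, handles arbitrary bounded measurable $f$ in one stroke, and produces $I_n(f_n)$ straight from its definition as a projection residual; what the paper's route buys in exchange is the explicit transformation law for exponential states and the concrete Meixner identification, which your argument never needs.

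Two points require care before your sketch is a proof, and you have correctly flagged the first. The rigorous justification of the disentangling formula is of exactly the same analytic type as the paper's proof of Proposition~\ref{prop:expo-trafo}: you must show that $t\mapsto g_t:=\exp\bigl(\tanh t\,k^+(\1)\bigr)\exp\bigl(\ln\bigl(1-\tanh^2 t\bigr)k^0(\1)\bigr)\exp\bigl(-\tanh t\,k^-(\1)\bigr)F$ is norm-differentiable, lies in the domain of the closure $A$ from Lemma~\ref{lem:closure}, and satisfies $\partial_t g_t=\mathrm i A g_t$; uniqueness for the unitary group then gives $g_t=\exp\bigl(t\bigl(k^+(\1)-k^-(\1)\bigr)\bigr)F$. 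Since the $f$ in the theorem lies in $\mathcal D$, you only need this on $\mathcal D$, where the lowering exponential terminates and the raising series converges in norm by the same negative-binomial tail estimate that underlies the analytic-vector bound in Lemma~\ref{lem:closure}; so the gap is fillable with effort comparable to the paper's. Second, for general bounded measurable $h_m$ the function $\eta\mapsto\frac{1}{m!}\int h_m\,\dd\eta^{(m)}$ is not literally a polynomial in counting variables, so membership in the $L^2$-closure $\mathcal P_m$ (and likewise the claim that the diagonal corrections relating $\eta^{(n)}$ to $\eta^{\otimes n}$ lie in $\mathcal P_{n-1}$) needs a short approximation step --- product indicators plus dominated convergence using the finite moments of $\eta(E)$. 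This is where a residue of the paper's monotone class argument quietly reappears in your proof, though in a much milder form.
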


Combining Theorems~\ref{thm:symmetry} and~\ref{thm:unitary-meixner}, we obtain a new proof of an intertwining relation proven by other means in \cite[Theorems~3.15 and~5.2]{IntertwiningConsistent}. Let \smash{$\bigl(p_t^{(n)}\bigr)_{t\geq 0}$} be a semigroup on symmetric bounded and measurable functions from $E^n$ to $\C$ that satisfies
\[
	(P_t f)(\delta_{x_1}+\cdots + \delta_{x_n}) = \bigl(p_t^{(n)} f_n\bigr)(x_1,\ldots,x_n).
\]
for all measurable bounded $f \colon \mathbf{N}_{<\infty} \to \C$ and $x_1, \ldots, x_n \in E$.
\begin{Corollary} \label{cor:meixner-intertwining}
	Under the assumptions of Theorem~$\ref{thm:symmetry}$: For all $t\geq 0$, all $n\in \N$, and every bounded measurable $f_n\colon E^n\to \C$,
	\[
		P_t I_n(f_n) = I_n\bigl(p_t^{(n)} f_n\bigr).
	\]
\end{Corollary}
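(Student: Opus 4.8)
The plan is to derive Corollary~\ref{cor:meixner-intertwining} by combining the self-intertwining relation of Theorem~\ref{thm:symmetry} with the explicit identification of the unitary $U$ in Theorem~\ref{thm:unitary-meixner}. The key observation is that for the specific choice $\xi = \mathrm{artanh}\,\sqrt p$ (which is real, so $\overline\xi = \xi$), the unitary $U = \exp(\xi k^+(\1) - \xi k^-(\1))$ appears as the ``raising'' factor in ${\rm U}(\xi,\theta)$, and Theorem~\ref{thm:unitary-meixner} tells us that $U$ sends a function $f$ supported on $n$-particle configurations precisely to a constant multiple of $I_n(f_n)$. The strategy is therefore to transport the intertwining $P_t U = U P_t$ through this identification, reading off what it says on the Meixner image.

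First I would fix $n \in \N$ and a bounded measurable $f_n \colon E^n \to \C$. I would introduce the function $f \colon \mathbf N_{<\infty} \to \C$ supported on $\{\eta\colon \eta(E) = n\}$ defined by $f(\delta_{x_1}+\cdots+\delta_{x_n}) = f_n(x_1,\ldots,x_n)$ (extended by zero on configurations of other sizes), so that $f$ recovers $f_n$ via the recipe $f_n(x_1,\ldots,x_n) = f(\delta_{x_1}+\cdots+\delta_{x_n})$. By Theorem~\ref{thm:unitary-meixner} we then have the key relation
\[
	U f = c_n\, I_n(f_n), \qquad c_n := (1-p)^{\alpha(E)/2}\,\frac{1}{n!}\,(1-p)^n,
\]
with $c_n \neq 0$ a nonzero constant independent of $f_n$. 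Next I would apply Theorem~\ref{thm:symmetry} with the same real $\xi$ and any convenient $\theta$; since $U$ is the left-hand factor of ${\rm U}(\xi,\theta)$ and the right-hand factor is the multiplication operator $\exp(\mathrm i\theta(\alpha(E)+2\eta(E)))$, which on $n$-particle configurations is just multiplication by the scalar $\exp(\mathrm i\theta(\alpha(E)+2n))$, I can arrange (e.g.\ by taking $\theta = 0$) that the commutation $P_t\, {\rm U}(\xi,0) = {\rm U}(\xi,0)\, P_t$ reduces on the relevant subspace to $P_t\, U = U\, P_t$.

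The final step is bookkeeping. Applying $P_t U = U P_t$ to $f$ gives $P_t(U f) = U(P_t f)$. On the left, using $U f = c_n I_n(f_n)$, the left-hand side is $c_n\, P_t I_n(f_n)$. On the right, $P_t f$ is again a function supported on $n$-particle configurations, by the conservativity established in Proposition~\ref{proposition: consistency implies conservative} (consistency forces the total particle number to be preserved), and its associated symmetric kernel is precisely $(P_t f)(\delta_{x_1}+\cdots+\delta_{x_n}) = (p_t^{(n)} f_n)(x_1,\ldots,x_n)$ by the definition of $p_t^{(n)}$. Hence another application of Theorem~\ref{thm:unitary-meixner} yields $U(P_t f) = c_n\, I_n(p_t^{(n)} f_n)$. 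Equating and cancelling the nonzero constant $c_n$ delivers $P_t I_n(f_n) = I_n(p_t^{(n)} f_n)$, which is the claim.

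I expect the main obstacle to be the conservativity step that guarantees $P_t f$ is still supported on $n$-particle configurations, so that $p_t^{(n)}$ is well defined and the second application of Theorem~\ref{thm:unitary-meixner} is legitimate; this is where Proposition~\ref{proposition: consistency implies conservative} is essential. A secondary technical point is to confirm that the identity $U f = c_n I_n(f_n)$, proven for bounded $f$, extends by density and $L^2$-boundedness of the operators to the general kernels $f_n$ under consideration, and that all equalities hold $\rho_{p,\alpha}$-almost everywhere; but these are routine once the support and scalar-multiple structure are in place.
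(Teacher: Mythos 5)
Your proposal is correct and takes essentially the same route as the paper's own (very compressed) proof: apply Theorem~\ref{thm:symmetry} with $\xi=\mathrm{artanh}\,\sqrt p$ and $\theta=0$ so that ${\rm U}(\xi,0)=U$, identify $Uf=c_nI_n(f_n)$ and $UP_tf=c_nI_n\bigl(p_t^{(n)}f_n\bigr)$ via Theorem~\ref{thm:unitary-meixner}, and invoke conservativity (Proposition~\ref{proposition: consistency implies conservative}) to guarantee that $P_tf$ remains supported on $n$-particle configurations, so the nonzero constant $c_n$ cancels. Your version merely spells out the bookkeeping the paper leaves implicit; the closing density remark is unnecessary since Theorem~\ref{thm:unitary-meixner} already covers all bounded measurable $f$, and $P_tf$ stays bounded.
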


\begin{proof}
	Apply Theorem~\ref{thm:symmetry} to ${\rm U}(\xi,\theta)$ for $\xi =\mathrm{artanh}\, \sqrt p$ and $\theta =0$, combine with Theorem~\ref{thm:unitary-meixner}, and observe that $n$-dependent constants may be dropped because the process is conservative (see Proposition~\ref{proposition: consistency implies conservative} below).
\end{proof}

Furthermore, we note that, in addition to unitary intertwinings, relation \cite[equa\-tion~(3.6)]{IntertwiningConsistent} in terms of the $K$-transform can also be derived using algebraic techniques: The $K$-transform can be expressed with the operator $k^+(\one)$. More precisely, it is equal to $\exp \bigl(\sqrt{p} k^+(\one) \bigr) f(\eta)$. This relation serves as the continuum counterpart to \cite[Lemma~4.2]{carinci2021consistent}.

\subsection{Algebraic expression for the generator} \label{sec:alg-gen}

On the lattice the generator~\eqref{eq:sip} of the symmetric inclusion process can be expressed directly in terms of raising and lowering operators associated with lattice sites \cite{gkrv}: Suppose that $\alpha_y =\alpha$ does not depend on $y$. Let $\vect e_x \in \N_0^E$ be the vector given by $\vect e_x(y) = \delta_{x,y}$. Set
\begin{gather*}
	k_x^+ f(\vect n ) = \frac{1}{\sqrt p}\, n_x f(\vect n - \vect e_x),\quad
	k_x^- f(\vect n) = \sqrt p\, (\alpha + n_x) f(\vect n + \vect e_x),\\
	k_x^0 f(\vect n) = \Bigl( \frac \alpha 2 + n_x\Bigr) f(\vect n).
\end{gather*}
\big(Our operators $k^\#(\varphi)$ correspond to $k^\#(\varphi) = \!\sum_x \varphi(x) k_x^\#$ for $\#=+,0$ and $k^-(\varphi) =\! \sum_{x} \overline{\varphi(x)} k_x^-$.\big)
Then
\begin{align} \label{eq:sip-algebraic}
	L & = \notag \sum_{\substack{x,y\in E\\x \neq y}} c(x,y)\biggl( k_x^+ k_y^- + k_x^- k_y^+ - 2 k_x^0 k_y^0 + \frac{\alpha^2}{2} I \biggr) \\
 & = \sum_{x,y\in E} c(x,y)\biggl( k_x^+ k_y^- + k_x^- k_y^+ - 2 k_x^0 k_y^0 + \biggl(\frac{\alpha^2}{2} - \alpha \one_{\{x=y\}} \biggr) I \biggr),
\end{align}
where $I$ is the identity operator. The rewrite is a key ingredient to the algebraic approach for duality: Dual processes such as the Brownian energy process have the same algebraic expression but for a different representation of the $\mathfrak{su}(1,1)$ algebra \cite[Section~6.2]{gkrv}.

In the continuum, equation~\eqref{eq:sip-algebraic} is problematic as we cannot define operators $k_x^\#$--in the lingo of quantum field theory, $x\mapsto k_x^\#$ only make sense as operator-valued \emph{distributions} \cite[Sec\-tion~3.7]{talagrand-book}. However, using the operators $k^\#(\varphi)$ introduced above, equation~\eqref{eq:sip-algebraic} generalizes nicely, when $c(x,y)$ is of product form $c(x,y) = 2 \varphi(x)\varphi(y)$. In that case
\begin{equation} \label{eq:lkphi}
	L = k^+(\varphi) k^-(\varphi) + k^-(\varphi)k^+(\varphi) - 2 k^0(\varphi)^2 + \biggl( \frac{1}{2} \biggl(\int \varphi \, \dd \alpha \biggr)^2 - \int \varphi^2 \, \dd \alpha \biggr) I.
\end{equation}
This expression makes perfect sense in the continuum setup as well. Moreover, it can readily be checked that \eqref{eq:lkphi} holds if $L$ is the formal generator \eqref{eq:gsip} of the generalized symmetric inclusion process as well. A similar rewrite is possible if $c(x,y)$ is a linear combination of symmetrized products $\varphi_1(x)\varphi_2(y) + \varphi_2(x)\varphi_1(y)$. More generally, say on $E=\R^d$ with homogeneous measures $\alpha(\dd x) = \alpha \dd x$, one may hope to give meaning to expressions like
\[
	L = \int c(x,y) \biggl(k_x^+k_y^- + k_y^+ k_x^- - 2 k_x^0 k_y^0 + \frac{\alpha^2}{2} \biggr) \dd x \dd y - \alpha \int c(x,x) \, \dd x
\] 	
by first defining it as~\eqref{eq:lkphi} and its siblings when $c(x,y)$ is of product form or a linear combination of symmetrized products, and second by approximating sufficiently nice functions $c(x,y)$ by linear combinations of symmetrized products. We leave as an open problem to carry out this program, to clarify its usefulness for Markov processes, and to figure out its relation to Hamilton operators $\int \mu(k) a(k)^\dagger a(k) \dd k$ in quantum field theory and quantum many-body theory (see, e.g., the non-numbered proposition preceding Theorem~X.45 in Reed and Simon~\cite{reedSimonII} or Talagrand~\mbox{\cite[Section~3.7]{talagrand-book}}).

\section{Papangelou kernel for the Pascal point process} \label{sec:pascal}

If $X$ is a negative binomial variable with parameters $p\in (0,1)$ and $a>0$, then
\[
 (n+1)\P( X = n+1) = (1-p)^a\, \frac{(a)_{n+1}}{n!} p^{n+1} = p (a+n)\P( X = n),
\]
for all $n\in \N_0$, accordingly
\[
	\E[ X f(X)] = \E[ p(a+X) f(X+1)]
\]
for all $f\colon \N_0\to \R_+$. The following proposition gives the analogous property for Pascal point processes. For future reference we state and prove the proposition for $\sigma$-finite measures $\alpha$, allowing for configurations with infinitely many particles. Thus, let $\mathbf N$ be the space of $s$-finite counting measures on $E$. As $E$ is a Borel space, $\mathbf N$ consists of the measures $\eta$ on $E$ that are finite or countable sums of Dirac measures. The space $\mathbf N$ is equipped with the $\sigma$-algebra $\mathcal N$ generated by the counting variables $B\mapsto \eta(B)$, $B\in \mathcal E$.

A \emph{Pascal point process} with parameters $p$ and $\alpha$ is a random variable $\eta\colon(\Omega,\mathcal F, \P)\to (\mathbf N,\mathcal N)$ that satisfies the properties listed in the definition of $\rho_{p,\alpha}$: Counting variables for disjoint regions~$B_i$ are independent, and each counting variable $\eta(B)$ has a negative binomial distribution with parameters $p$ and $\alpha(B)$. For sets with infinite mass $\alpha(B) = \infty$, the counting variable $\eta(B)$ is almost surely infinite. Pascal point processes have Laplace functional
\begin{gather*}
	\E\bigl[ \e^{-\int f\dd \eta}\bigr] = \exp \biggl(\! - \int \log \biggl(\frac{1- p\,\e^{-f(x)}}{1- p}\biggr) \alpha (\dd x)\biggr) = 	\exp\Biggl(\! - \sum_{j=1}^\infty \int \bigl(1- \e^{- j f(x)}\bigr) \frac{p^j}{j} \alpha(\dd x) \Biggr)
\end{gather*}
for measurable $f \colon E \to \R_+$.

\begin{Proposition}
	Fix $p\in (0,1)$ and a $\sigma$-finite measure $\alpha$ on $E$. Let $\eta$ be a Pascal point process with parameters $p$ and $\alpha$. Then
	\[
		\E\biggl[ \int F(x,\eta)\eta(\dd x)\biggr] = \E\biggl[\int F(x,\eta+\delta_x) p (\alpha+\eta)(\dd x)\biggr]
	\]
	for all measurable $F\colon E\times \mathbf N \to \R_+$.
\end{Proposition}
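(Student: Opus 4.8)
The plan is to reduce the identity to the one-dimensional negative binomial relation recalled at the start of this section, using the independence of the counting variables over disjoint sets together with a functional monotone class argument. Observe first that both sides of the claimed equation are non-negative and linear in $F$ and, by monotone convergence, stable under increasing limits $F_j \uparrow F$. It therefore suffices to verify the identity on a multiplicative class of non-negative functions generating the product $\sigma$-algebra $\mathcal E \otimes \mathcal N$. Since $\mathcal N$ is generated by the counting variables $\eta \mapsto \eta(B)$ and $\alpha$ is $\sigma$-finite, such a class is furnished by the functions $F(x,\eta) = \1_{B_k}(x)\,\Phi\bigl(\eta(B_1),\dots,\eta(B_m)\bigr)$, where $B_1,\dots,B_m$ is a finite measurable partition of a set of finite $\alpha$-measure, $k\in\{1,\dots,m\}$, and $\Phi\colon \N_0^m\to\R_+$ is measurable.

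For such an $F$ the left-hand integrand is $\eta(B_k)\,\Phi(\eta(B_1),\dots,\eta(B_m))$, so the left-hand side equals $\E\bigl[\eta(B_k)\,\Phi(\eta(B_1),\dots,\eta(B_m))\bigr]$. On the right-hand side I would use that for $x\in B_k$ one has $(\eta+\delta_x)(B_i)=\eta(B_i)+\1_{\{i=k\}}$, so that the integrand is $\Phi(\eta(B_1),\dots,\eta(B_k)+1,\dots,\eta(B_m))$ weighted by $p(\alpha+\eta)(B_k)=p(\alpha(B_k)+\eta(B_k))$; hence the right-hand side equals $\E\bigl[p(\alpha(B_k)+\eta(B_k))\,\Phi(\eta(B_1),\dots,\eta(B_k)+1,\dots,\eta(B_m))\bigr]$. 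Writing $N_i:=\eta(B_i)$ and $a_k:=\alpha(B_k)$, I would then condition on the variables $N_i$ with $i\neq k$. By the second defining property of $\rho_{p,\alpha}$ these are independent of $N_k$, which is negative binomial with parameters $p$ and $a_k$; applying the one-dimensional identity $\E[X f(X)] = \E[p(a_k+X) f(X+1)]$ with $f(n):=\Phi(N_1,\dots,n,\dots,N_m)$ (the remaining coordinates frozen) turns the right-hand expression into $\E[N_k\,\Phi(N_1,\dots,N_m)]$, matching the left-hand side. Taking the outer expectation over the frozen coordinates closes the computation for functions in the generating class.

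It remains to upgrade from the generating class to arbitrary measurable $F\ge 0$. Here I would invoke the functional monotone class theorem: the products $\1_B(x)\1_C(\eta)$ with $B\in\mathcal E$ and $C$ in the algebra generated by the events $\{\eta(A)=k\}$ form a multiplicative system generating $\mathcal E\otimes\mathcal N$, and every such function is a finite linear combination of members of the class above once one refines to the partition generated by $B$ and the sets $A$ occurring in $C$. For a $\sigma$-finite $\alpha$ one exhausts $E$ by an increasing sequence $E_j\uparrow E$ with $\alpha(E_j)<\infty$, proves the identity for $F$ supported in the $x$-variable on $E_j$ and depending on $\eta$ only through finite-measure sets, and lets $j\to\infty$ by monotone convergence; on sets of infinite mass $\eta$ is almost surely infinite, so all approximations must stay localized. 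I expect this last, purely measure-theoretic step—verifying that the generating class is rich enough and that the monotone-class extension is legitimate, while keeping the bookkeeping of independence and localization correct in the $\sigma$-finite setting—to be the only real obstacle; the probabilistic heart of the argument is the single conditioning step reducing everything to the one-dimensional negative binomial relation.
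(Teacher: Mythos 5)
Your proposal is correct, but it takes a genuinely different route from the paper's. The paper exploits the compound Poisson structure of the Pascal process: it realizes $\eta$ as $\sum_i N_i\delta_{X_i}$ with $\Pi=\sum_i\delta_{(X_i,N_i)}$ a Poisson process on $E\times\N_0$ of intensity $\alpha\otimes\bigl(\sum_{n\geq 1}\frac{p^n}{n}\delta_n\bigr)$, applies the Mecke equation for $\Pi$ twice (once to $\varphi(x,\mu)=F(x,\mu)$, once to $\varphi(x,\mu)=F(x,\mu+\delta_x)$), and eliminates the multiplicities $n\geq 2$ by a geometric index shift; this yields the identity for arbitrary measurable $F\geq 0$ in one stroke, with no monotone class step, at the cost of invoking the L\'evy representation and the Mecke formula. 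You instead reduce directly to the scalar negative binomial relation $\E[Xf(X)]=\E[p(a+X)f(X+1)]$ --- which the paper states only as motivation --- using independence of counting variables over disjoint sets; your core computation (the integrand on the right is constant equal to $\Phi(N_1,\dots,N_k+1,\dots,N_m)$ on $B_k$ with total weight $p(\alpha(B_k)+N_k)$, then freeze $(N_i)_{i\neq k}$ and apply the one-dimensional identity, by Tonelli rather than conditioning if $\Phi$ is unbounded) is sound, and note that it also covers cells with $\alpha(B_k)=0$, where both sides vanish. The extension step you flag as the ``only real obstacle'' does go through, and is cleanest phrased as uniqueness of measures rather than a functional monotone class theorem: both sides define measures $\mu_1,\mu_2$ on $(E\times\mathbf N,\mathcal E\otimes\mathcal N)$ (the left side is the Campbell measure of $\eta$); your computation shows they agree on the $\pi$-system of sets $B\times\{\eta(A_1)=k_1,\dots,\eta(A_r)=k_r\}$ with $B,A_1,\dots,A_r$ of finite $\alpha$-measure (refine to a partition); this $\pi$-system generates $\mathcal E\otimes\mathcal N$ because, for $\sigma$-finite $\alpha$ with $E_j\uparrow E$, $\alpha(E_j)<\infty$, every counting variable arises as the monotone limit $\eta(B\cap E_j)\uparrow\eta(B)$, so your worry about infinite-mass sets (where $\eta$ is a.s.\ infinite) is not actually an obstruction; and $\mu_1(E_j\times\mathbf N)=\E[\eta(E_j)]=p\,\alpha(E_j)/(1-p)<\infty$ supplies the $\sigma$-finiteness needed for the uniqueness theorem, after which simple functions and monotone convergence upgrade the set identity to all measurable $F\geq 0$. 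In exchange for this bookkeeping your argument is more elementary and self-contained, using only the two defining properties of $\rho_{p,\alpha}$; the paper's argument buys generality of $F$ immediately and places the Papangelou kernel within the Poisson--L\'evy calculus that the rest of the section draws on.
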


The proposition says that the kernel $E\times \mathcal N\to \R_+$, $(x,B)\mapsto \kappa(x,B) = p(\alpha(B) + \eta(B))$ is a~\emph{Papangelou kernel} for the Pascal point process. For a general discussion on Papangelou kernels, we refer to \cite{MatthesWarmuthMecke} or \cite[Section 1.2.2]{Rafler2009}.

\begin{proof}
	Pascal point processes are compound Poisson; we deduce the proposition from the Mecke formula for Poisson point processes. Let $\Pi = \sum_i \delta_{(X_i,N_i)}$ be a Poisson point process on $E\times \N_0$ with intensity measure $\beta = \alpha\otimes \bigl(\sum_{n=1}^\infty \frac{p^n}{n} \delta_n\bigr)$. Then $\xi = \sum_i N_i \delta_{X_i}$ is a Pascal point process with law $\rho_{p,\alpha}$. The Mecke equation for $\Pi$ \cite[Theorem 4.1]{LastPenroseLecturesOnThePoissonProcess} and measurable $G\colon (E\times \N_0)\times \mathbf N(E\times \N_0) \to \R_+$ gives
\[
	\E\biggl[ \sum_i G( (X_i, N_i), \Pi )\biggr] = \sum_{n=1}^\infty \frac{p^n}{n} \int \E\bigl[ G\bigl( (x,n), \delta_{(x,n)} + \Pi\bigr) \bigr] \alpha (\dd x).
\]
Let $\varphi\colon E\times \mathbf N(E)\to \R_+$ be a measurable function. We apply the Mecke equation for $\Pi$ to $G\bigl((x,n),\sum_i \delta_{(x_i,n_i)}\bigr):= n \varphi\bigl(x, \sum_i n_i \delta_{x_i}\bigr)$ and get
\begin{equation}
 \label{equation: mecke1}
	\E\biggl[\sum_i N_i \varphi( X_i, \xi)\biggr] = \sum_{n=1}^\infty p^n \int \E[ \varphi(x, \xi+ n \delta_x)] \alpha(\dd x).
\end{equation}
Using \eqref{equation: mecke1} for $\varphi(x,\mu) = F(x,\mu + \delta_x)$, we obtain
\begin{align}
 p\, \E \biggl[ \sum_i N_i F\bigl( X_i, \xi +\delta_{X_i}\bigr)\biggr] &= p\sum_{n=1}^\infty p^n \int \E [ F(x, \xi + (n+1) \delta_{x}) ]\alpha(\dd x) \notag \\
 &= \sum_{n=2}^\infty p^n \int \E [ F(x, \xi + n \delta_{x}) ] \alpha(\dd x). \label{eq:mecke2}
\end{align}
We apply~\eqref{equation: mecke1} to $\varphi(x,\mu) = F(x,\mu)$ and eliminate the sum over $n\geq 2$ using~\eqref{eq:mecke2}. This gives
\begin{equation*}
	\E\biggl[\sum_i N_i \varphi( X_i, \xi)\biggr] =
		p \int \E[ F(x, \xi+ \delta_x)] \alpha(\dd x) + p\, \E \biggl[ \sum_i N_i F\bigl( X_i, \xi +\delta_{X_i}\bigr)\biggr].
\end{equation*}
Equivalently,
\[
	\E\biggl[ \int F(x,\xi) \xi(\dd x)\biggr] = p\, \E\biggl[ \int F(x, \xi+\delta_x) (\alpha + \xi)(\dd x)\biggr]
\]
and the proof is complete as $\xi$ and $\eta$ are equal in distribution.
\end{proof}

Turning back to the law $\rho_{p,\alpha}$ on $\mathbf N_{<\infty}$ for finite measures $\alpha$, we obtain
\begin{equation} \label{eq:papangelou}
	\int \biggl( \int F(x,\eta)\eta(\dd x)\biggr) \rho_{p,\alpha}(\dd x)
	= \int\biggl( \int F(x,\eta+\delta_x) p (\alpha+\eta)(\dd x)\biggr) \rho_{p,\alpha}(\dd x)
\end{equation}
for all measurable $F\colon E\times \mathbf N_{<\infty}\to \R_+$, hence also all bounded measurable functions $F$ (here we use that the expected total number of particles is finite when $\alpha$ has finite total mass).

\begin{Lemma} \label{lem:adjoints}
	For all bounded measurable $\varphi\colon E\to \C$ and all $f,g\in \mathcal D$,
	\[
		\bigl\langle f, k^+(\varphi) g\bigr\rangle = \langle k^-(\varphi) f,g \rangle.
	\]
\end{Lemma}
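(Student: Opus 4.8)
The plan is to reduce the claim directly to the Papangelou identity~\eqref{eq:papangelou}. First I would expand the scalar product using the definition of $k^+(\varphi)$:
\[
	\bigl\langle f, k^+(\varphi) g\bigr\rangle = \int \overline{f(\eta)}\, \frac{1}{\sqrt p} \int \varphi(x) g(\eta - \delta_x)\, \eta(\dd x)\, \rho_{p,\alpha}(\dd \eta).
\]
This has exactly the shape of the left-hand side of~\eqref{eq:papangelou} with the choice
\[
	F(x,\eta) = \frac{1}{\sqrt p}\, \overline{f(\eta)}\, \varphi(x)\, g(\eta - \delta_x).
\]

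The one point requiring care is that~\eqref{eq:papangelou} is stated for nonnegative $F$, whereas here $F$ is complex-valued. I would resolve this by linearity: since $f,g \in \mathcal D$ are bounded and supported on configurations with at most $m_f$ respectively $m_g$ particles, the function $F$ is bounded and vanishes unless $\eta(E) \leq m_f$ and $\eta(E) \leq m_g + 1$, so both sides of~\eqref{eq:papangelou} are finite (the expected particle number is finite because $\alpha(E) < \infty$). Decomposing $F$ into real and imaginary parts, and each of those into positive and negative parts, the identity~\eqref{eq:papangelou} extends to $F$ by additivity, as already remarked in the excerpt.

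Applying~\eqref{eq:papangelou} then replaces $\eta$ by $\eta + \delta_x$ in the second argument of $F$ and inserts the kernel $p(\alpha + \eta)(\dd x)$. Since
\[
	F(x, \eta + \delta_x) = \frac{1}{\sqrt p}\, \overline{f(\eta + \delta_x)}\, \varphi(x)\, g(\eta),
\]
the right-hand side becomes
\[
	\int g(\eta)\, \biggl( \sqrt p \int \varphi(x)\, \overline{f(\eta + \delta_x)}\, (\alpha + \eta)(\dd x)\biggr) \rho_{p,\alpha}(\dd \eta).
\]
The inner integral is precisely $\overline{\bigl(k^-(\varphi) f\bigr)(\eta)}$, by the definition of $k^-(\varphi)$ together with $\overline{\overline{\varphi(x)}} = \varphi(x)$. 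Hence the displayed expression equals $\langle k^-(\varphi) f, g\rangle$, which completes the argument.

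The only genuine obstacle is the real-to-complex extension of~\eqref{eq:papangelou} and the attendant integrability bookkeeping; both are handled cleanly by the finite-cutoff boundedness built into the domain $\mathcal D$ and the finiteness of $\alpha(E)$. Everything else is a mechanical substitution $\eta \mapsto \eta + \delta_x$ matching the two operator definitions.
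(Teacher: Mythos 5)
Your proof is correct and follows essentially the same route as the paper: the paper's proof likewise applies the Papangelou identity~\eqref{eq:papangelou} to $F(x,\eta) = \varphi(x)\,\overline{f(\eta)}\,g(\eta-\delta_x)$ and reads off the result as $\langle k^-(\varphi) f, g\rangle$. The real-to-complex extension you carry out by hand is handled in the paper by the remark immediately following~\eqref{eq:papangelou}, which already extends the identity to all bounded measurable $F$ using that the expected particle number is finite when $\alpha(E)<\infty$.
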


\begin{proof}	
	Let us denote integration with respect to $\rho_{p,\alpha}$ as $\E[f(\eta)] =\int f\dd \rho_{p,\alpha}$. We apply equation~\eqref{eq:papangelou} to $F(x,\eta) = \varphi(x) \overline{f(\eta)} g(\eta - \delta_x)$ and obtain
	\begin{align*}
		\bigl\langle f, k^+(\varphi) g \bigr\rangle
			& = \frac 1{\sqrt p} \E\biggl[\int \varphi(x) \overline{f(\eta)} g(\eta - \delta_x) \eta(\dd x)\biggr]\\
			& = \sqrt p\, \E\biggl[\int \varphi(x) \overline{f(\eta + \delta_x)} g(\eta) (\alpha+ \eta)(\dd x)\biggr]\\
			& = \langle k^-(\varphi) f,g \rangle. \qedhere
\tag*{\qed}
\end{align*}
\renewcommand{\qed}{}
\end{proof}

\section{Symmetries. Proof of Theorem~\ref{thm:symmetry}} \label{sec:symmetry}

Before we turn to the proof of Theorem~\ref{thm:symmetry}, we shows that consistency implies conservativity, which is of interest in its own.

\begin{Proposition}
 \label{proposition: consistency implies conservative}
	Every consistent Markov semigroup is conservative.
\end{Proposition}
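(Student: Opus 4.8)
The plan is to show that the semigroup fixes every falling-factorial function of the particle number, and then to extract conservativity from a moment argument. Write $\eta(E)$ for the number of particles in a configuration $\eta$ and, for $k\in\N_0$, set $g_k(\eta):=\eta(E)(\eta(E)-1)\cdots(\eta(E)-k+1)$, with $g_0\equiv\1$. The first step is the elementary identity $\mathcal A^k\1=g_k$. Indeed $\mathcal A\1(\eta)=\int \1\,\eta(\dd x)=\eta(E)=g_1(\eta)$, and if $\eta=\delta_{x_1}+\cdots+\delta_{x_n}$ then $\mathcal A^k\1(\eta)=\sum_{i=1}^n \mathcal A^{k-1}\1(\eta-\delta_{x_i})=\sum_{i=1}^n g_{k-1}(\eta-\delta_{x_i})=n\,(n-1)(n-2)\cdots(n-k+1)=g_k(\eta)$, since each summand depends only on $(\eta-\delta_{x_i})(E)=n-1$; the claim follows by induction on $k$.

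The second step is to iterate consistency. As $P_t$ is Markov, $P_t\1=\1=g_0$. Consistency gives $P_t\mathcal A f=\mathcal A P_t f$ for all nonnegative measurable $f$; applying it to $f=\mathcal A^{k-1}\1=g_{k-1}$ and using the induction hypothesis $P_t g_{k-1}=g_{k-1}$ yields $P_t g_k=P_t\mathcal A^k\1=\mathcal A^k P_t\1=\mathcal A^k\1=g_k$. Hence $P_t g_k=g_k$ for every $k\ge 0$.

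The third step concludes. Fix $\eta$ with $\eta(E)=n$ and let $\nu$ be the distribution of the particle number under the transition kernel $P_t(\eta,\cdot)$, a probability measure on $\N_0$ (here $P_t\1=\1$ ensures total mass one). Step two reads $\int_{\N_0} m(m-1)\cdots(m-k+1)\,\nu(\dd m)=n(n-1)\cdots(n-k+1)$ for all $k$. For an integer $m\ge 0$ the falling factorial $m(m-1)\cdots(m-k+1)$ is nonnegative and vanishes exactly when $m\le k-1$; taking $k=n+1$ makes the right-hand side zero, so the integrand vanishes $\nu$-almost everywhere and $\nu$ is supported on $\{0,1,\ldots,n\}$. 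Taking $k=1$ gives $\int m\,\nu(\dd m)=n$, and a nonnegative integer variable bounded by $n$ with mean $n$ equals $n$ almost surely, so $\nu=\delta_n$. Thus $P_t\1_{\{\eta(E)=n\}}=\1_{\{\eta(E)=n\}}$ for every $n$, i.e.\ the particle number is preserved, which is exactly conservativity.

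The only delicate point is the measure-theoretic bookkeeping behind the iteration in the second step: the functions $g_k$ are unbounded, so one must read $P_t g_k$ and the repeated use of consistency through the canonical extension of $P_t$ to $[0,\infty]$-valued measurable functions (monotone convergence), and justify $P_t\1=\1$ at that level. This is harmless because each $\eta\in\mathbf N_{<\infty}$ carries finitely many particles, so each $g_k$ is finite-valued and, a posteriori, $P_t g_k=g_k$ is finite; if one prefers, the identities can first be established for the bounded truncations $\min(g_k,M)$ and then passed to the limit $M\to\infty$. I expect this approximation, rather than the underlying algebra, to be the part requiring the most care.
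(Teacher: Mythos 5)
Your proof is correct and follows essentially the same route as the paper: the identity $\mathcal A^k\1(\eta)=\eta(E)(\eta(E)-1)\cdots(\eta(E)-k+1)$, iterated consistency giving $P_t\mathcal A^k\1=\mathcal A^k\1$, and a factorial-moment argument to conclude. Your third step merely makes explicit what the paper compresses into ``the moment problem is uniquely solvable'' (vanishing of the $(n+1)$-st falling factorial forces support in $\{0,\ldots,n\}$, then mean $n$ pins down $\delta_n$), and your worry about unbounded functions is already absorbed by the paper's definition of consistency, which is stated for all measurable $f\colon\mathbf N_{<\infty}\to\R_+$.
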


\begin{proof}
	Let $(P_t)_{t\geq 0}$ be Markov semigroup on $(\mathbf N_{<\infty}, \mathcal N_{<\infty})$ that is consistent, i.e., $P_t \mathcal A f = \mathcal A P_t f$ for all $t\geq 0$ and all measurable $f\colon \mathbf N_{<\infty} \to \R_+$. Then, for all $k\in \N$,
	\begin{equation} \label{eq:ptak}
		P_t \mathcal A^k \one = \mathcal A^k P_t \one = \mathcal A^k \one.
	\end{equation}
	A straightforward computation shows that $\mathcal A^k\one$ is a descending factorial power of the total number of particles,
	\begin{equation} \label{eq:ak-factorial}
		\bigl( \mathcal A^k \one \bigr)(\eta) = \eta(E)( \eta(E)-1) \cdots ( \eta(E) - k +1),
	\end{equation}
	for all $\eta \in \mathbf N_{<\infty}$. Let $(\eta_t)_{t\geq 0}$ be a Markov process with semigroup $(P_t)_{t\geq 0}$ and deterministic initial condition $\eta_0=\mu \in \mathbf N_{<\infty}$, defined on some probability space $\bigl(\Omega,\mathscr F, \mathbb P_\mu\bigr)$. Equations~\eqref{eq:ptak} and~\eqref{eq:ak-factorial} show that under $\mathbb P_\mu$, at all times $t\geq 0$, the total number of particles $\eta_t(E)$ has the same factorial moments as $\eta_0(E)$. The factorial moments of $\eta_0(E) = \mu(E)$ (hence $\eta_t(E)$) vanish for $k\geq \mu(E)+1$, therefore the moment problem is uniquely solvable and we conclude $\eta_t(E) = \eta_0(E) = \mu(E)$, $\P_\mu$-almost surely, and the process is conservative.
\end{proof}

Remember that the set $\mathcal D\subset \mathcal H$ consists of the bounded measurable functions $f$ supported in $\{\eta\colon \eta(E)\leq n_f\}$ for some $n_f<\infty$.

\begin{Lemma} \label{lem:closure}
	For every $\xi \in \C$, the operator $\frac{1}{\mathrm i} \bigl( \xi k^+(\1) - \overline{\xi} k^-(\1)\bigr)$ with domain $\mathcal D$ is closable and its closure is self-adjoint.
\end{Lemma}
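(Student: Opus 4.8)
The plan is to identify the operator $T := \frac{1}{\mathrm i}\bigl(\xi k^+(\1) - \overline{\xi}\,k^-(\1)\bigr)$ on the dense domain $\mathcal D$ as a \emph{symmetric} operator and then to prove \emph{essential} self-adjointness by producing a dense set of analytic vectors; Nelson's analytic-vector theorem then guarantees that the closure is self-adjoint, which is exactly the assertion.

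First, symmetry. Lemma~\ref{lem:adjoints} with $\varphi = \1$ gives $\langle f, k^+(\1)g\rangle = \langle k^-(\1)f, g\rangle$, and taking complex conjugates yields the mirror identity $\langle f, k^-(\1)g\rangle = \langle k^+(\1)f, g\rangle$. Using these two relations together with $\overline{1/\mathrm i} = -1/\mathrm i$, a direct computation gives
\[
\langle Tf, g\rangle = \mathrm i\bigl(\overline{\xi}\,\langle k^+(\1)f, g\rangle - \xi\,\langle k^-(\1)f, g\rangle\bigr) = \langle f, Tg\rangle
\]
for all $f,g\in\mathcal D$. Since $\mathcal D$ is dense in $\mathcal H$, the operator $T$ is symmetric and densely defined, hence automatically closable with symmetric closure; the only remaining point is to upgrade symmetry of the closure to self-adjointness.

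For that step I would decompose $\mathcal H = \bigoplus_{n\ge 0}\mathcal H_n$ into the mutually orthogonal subspaces $\mathcal H_n$ of square-integrable functions supported on $\{\eta\colon\eta(E)=n\}$. Then $k^+(\1)$ maps $\mathcal H_n$ into $\mathcal H_{n+1}$, $k^-(\1)$ maps $\mathcal H_n$ into $\mathcal H_{n-1}$, and $T$ leaves $\mathcal D$ invariant, so every $v\in\mathcal H_n\cap\mathcal D$ lies in the domain of all powers $T^k$. The crucial quantitative input is a linear-in-$n$ bound on the raising and lowering operators: for $F\in\mathcal H_n$,
\[
\bigl\|k^+(\1)F\bigr\|^2 \le (n+1)\bigl(\alpha(E)+n\bigr)\|F\|^2, \qquad \bigl\|k^-(\1)F\bigr\|^2 \le n\bigl(\alpha(E)+n-1\bigr)\|F\|^2.
\]
I would obtain these by applying the Cauchy--Schwarz inequality to the integrals defining $k^{\pm}(\1)F(\eta)$ --- the integral for $k^+(\1)$ runs over the $n+1$ atoms of $\eta$, the one for $k^-(\1)$ against a measure of total mass $\alpha(E)+n-1$ --- and then invoking the Papangelou identity~\eqref{eq:papangelou} (with $F$ replaced by $|F|^2$) to rewrite the resulting averages of $|F(\eta\mp\delta_x)|^2$ back in terms of $\|F\|^2$.

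Granting these estimates, one has $\|k^{\pm}(\1)\|_{\mathcal H_m} \le \sqrt{\alpha(E)+1}\,(m+1)$, so that iterating $T$ and summing over the at most $2^k$ sign-paths (each a product of $k$ factors, maximized by the all-raising path) yields, for $v\in\mathcal H_n$,
\[
\bigl\|T^k v\bigr\| \le \bigl(2|\xi|\sqrt{\alpha(E)+1}\bigr)^k\,\frac{(n+k)!}{n!}\,\|v\|.
\]
Writing $C' := 2|\xi|\sqrt{\alpha(E)+1}$, this gives
\[
\sum_{k\ge 0}\frac{t^k}{k!}\bigl\|T^k v\bigr\| \le \|v\|\sum_{k\ge 0}(C't)^k\binom{n+k}{n} = \|v\|\,(1-C't)^{-(n+1)} < \infty
\]
for every $t<1/C'$, so each $v\in\mathcal H_n\cap\mathcal D$ is an analytic vector for $T$. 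Their linear span is dense in $\mathcal H$ and contained in $\mathcal D$, whence Nelson's theorem shows $T$ is essentially self-adjoint on $\mathcal D$; equivalently, $T$ is closable with self-adjoint closure. The main obstacle is precisely the pair of norm estimates: extracting the clean linear growth $\|k^{\pm}(\1)\|_{\mathcal H_n}=O(n)$ from the Papangelou kernel, and keeping the combinatorics of the iteration under control. Once those are in hand, convergence of the analytic-vector series is a routine geometric-series comparison.
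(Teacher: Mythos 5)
Your proof is correct, and its skeleton coincides with the paper's: symmetry of $T=\frac{1}{\mathrm i}\bigl(\xi k^+(\1)-\overline{\xi}k^-(\1)\bigr)$ on the dense, $T$-invariant domain $\mathcal D$ via Lemma~\ref{lem:adjoints}, then a total set of analytic vectors, then Nelson's theorem (the paper explicitly models this on the essential self-adjointness of Segal field operators, \cite[Theorem X.41(a)]{reedSimonII}). Where you genuinely diverge is the quantitative step, and your variant holds up. The paper argues pointwise: for $f\in\mathcal D$ with cutoff $m_f$ it uses $\bigl|k^\pm(\1)f(\eta)\bigr|\le \frac{1}{\sqrt p}\|f\|_\infty\bigl(\alpha(E)+m_f+1\bigr)\1_{\{\eta(E)\le m_f+1\}}$ and expands $\bigl(k^+(\1)-k^-(\1)\bigr)^n f$ term by term, arriving at $\bigl(2\|f\|_\infty/\sqrt p\bigr)^n(\alpha(E)+m_f+1)_n$. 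You instead prove sector-wise $L^2$ operator-norm bounds, and your two inequalities are exactly right: on $\{\eta\colon\eta(E)=n+1\}$, Cauchy--Schwarz gives $\bigl|k^+(\1)F(\eta)\bigr|^2\le\frac{n+1}{p}\int|F(\eta-\delta_x)|^2\,\eta(\dd x)$, and the Papangelou identity~\eqref{eq:papangelou} applied to $(x,\eta)\mapsto|F(\eta-\delta_x)|^2$ turns the expectation into $p\bigl(\alpha(E)+n\bigr)\|F\|^2$; the mirror computation with $(x,\eta)\mapsto|F(\eta)|^2$ handles $k^-(\1)$, and the factors of $p$ cancel exactly as the normalization intends. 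Your elementary estimates $(m+1)\bigl(\alpha(E)+m\bigr)\le(\alpha(E)+1)(m+1)^2$, the $2^k$-path expansion dominated by the all-raising path, and the negative-binomial series $\sum_k (C't)^k\binom{n+k}{n}=(1-C't)^{-(n+1)}$ are all correct, so every $v\in\mathcal H_n\cap\mathcal D$ is analytic and Nelson applies. Comparing the two: your bound $\|k^\pm(\1)F\|\le\sqrt{(n+1)(\alpha(E)+n)}\,\|F\|$ is sharper and more intrinsic---it holds for all of $\mathcal H_n$, involves neither $\|f\|_\infty$ nor $1/\sqrt p$, and shows directly that every finite-sector vector is analytic with radius depending only on $|\xi|$ and $\alpha(E)$---whereas the paper's sup-norm route is shorter and needs no Papangelou input at this stage beyond Lemma~\ref{lem:adjoints} itself. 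Two small points to make explicit in a final write-up: the sector bounds, derived for (say) bounded $F$, are all you need since you only ever iterate $T$ on the invariant domain $\mathcal D$; and $\bigcup_{n\ge 0}\bigl(\mathcal H_n\cap\mathcal D\bigr)$ is total in $\mathcal H$ because $\rho_{p,\alpha}$ charges only finite configurations.
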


Put differently, the operator is \emph{essentially self-adjoint} on $\mathcal D$ and the latter is a \emph{core} for the self-adjoint closure \cite[Section VIII.2]{reedSimonI}.

\begin{proof}
	The operator $A = \frac{1}{\mathrm i} \bigl( \xi k^+(\1)- \overline{\xi} k^-(\1)\bigr)$ is similar to the Segal field operators for free quantum fields, we adapt the proof of essential self-adjointness for the latter from Reed and Simon \cite[Theorem X.41\,(a)]{reedSimonII}.
	The domain $\mathcal D$ is dense in $\mathcal H$ and satisfies $\la f, A g\ra = \la Af, g\ra$ for all $f,g\in \mathcal D$ by Lemma~\ref{lem:adjoints}. Thus, $A$ with domain $\mathcal D(A) = \mathcal D$ is symmetric. Symmetric operators are always closable. 	
	To show that the closure is self-adjoint, we check that every vector $f\in \mathcal D$ is \emph{analytic} for $A$ and conclude with Nelson's analytic vector theorem \cite[Theorem~X.39]{reedSimonII}.
 As a reminder, Nelson's analytic vector theorem states that a symmetric operator on a Hilbert space whose domain contains a total set of analytic vectors is essential self-adjoint.
 A vector~$f$ is analytic for $A$ if there exists $\eps>0$ such that $\sum_{n=0}^\infty n!^{-1} \eps^n \| A^n \varphi\|<\infty$.
	Let $f\in \mathcal D$ and $m=m_f\in \N$ be such that $|f(\eta)|\leq \|f\|_\infty \1_{\{\eta(E)\leq m_f\}}$ on $\mathbf N_{<\infty}$. Then
	\begin{align*}
		&\bigl| k^+(\1) f(\eta)\bigr| \leq \frac{1}{\sqrt p} \|f\|_\infty (m_f+1) \1_{\{\eta(E) \leq m_f+1\}},\\
		&| k^-(\1) f(\eta)| \leq \sqrt p\, \|f\|_\infty \bigl(\alpha(E)+m_f-1\bigr) \1_{\{\eta(E) \leq m_f-1\}}
	\end{align*} 	
	for all $\eta \in \mathbf N_{<\infty}$. Set $\beta:= \alpha(E)$, we obtain a common upper bound to $k^\pm(\1) f$:
	\[
		\bigl| k^\pm(\1) f(\eta)\bigr| \leq \frac{1}{\sqrt p} \|f\|_\infty (\beta +m_f+1) \1_{\{\eta(E) \leq m_f + 1\}}.
	\]
	We expand the power $\bigl(k^+(\1) - k^-(\1)\bigr)^n f$ and bound each term in the expansion by repeated use of the previous inequality. This gives
	\[
		\bigl| \bigl(k^+(\1) - k^-(\1)\bigr)^n f(\eta)\bigr| \leq \biggl(\frac{ 2 \| f\|_\infty}{\sqrt p}\biggr)^{n} ( \beta + m_f + 1)_n \1_{\{\eta(E)\leq m_f+n\}}.
	\] 	
	The indicator on the right has norm $\leq 1$. Set $s = 2|\xi|\, \|f\|_\infty/\sqrt p$, we obtain
	\[
		\sum_{n=0}^\infty \frac{\eps^n}{n!} \bigl\| \bigl(\xi k^+(\1) -\overline{\xi} k^-(\1)\bigr)^n f \bigr\|
			 \leq \sum_{n=0}^\infty \frac{(s\eps)^n}{n!} \bigl(\beta + m_f + 1)_n,
	\] 	
	which is finite for $\eps< 1/s$. Thus, every vector $f\in \mathcal D$ is analytic and $A$ is essentially self-adjoint on $\mathcal D$. 		
\end{proof}

\begin{proof}[Proof of Theorem~\ref{thm:symmetry}]
	Let $t \geq 0$. As $\rho_{p,\alpha}$ is reversible, $P_t$ is a well-defined, self-adjoint and bounded operator on $L^2(\mathbf N_{<\infty}, \mathcal N_{<\infty}, \rho_{p, \alpha})$. The operator $\exp\bigl(2 \mathrm i \theta k^0(\1)\bigr)$ acts as multiplication with $\exp( \mathrm i \theta (\alpha(E) + 2\eta(E)))$, it commutes with $P_t$ because $(P_t)_{t \geq 0}$ preserves the total number of particles (see Proposition~\ref{proposition: consistency implies conservative} below).
	
		For $\exp\bigl( \xi k^+(\1) - \overline{\xi} k^-(\1)\bigr)$, the short informal reasoning is simple: $P_t$ is self-adjoint and commutes with $k^+(\1)$, therefore it also commutes with the adjoint $k^-(\1)$ and also with the difference $\xi k^+(\1) - \overline{\xi} k^-(\1)$ and the exponential $\exp\bigl( \xi k^+(\1) - \overline{\xi} k^-(\1)\bigr)$. The precise reasoning is slightly longer because the operators are unbounded and we have to consider domains carefully. Let $f$, $g$ be non-negative functions in $\mathcal D$. Arguing as in the proof of Lemma~\ref{lem:adjoints}, we obtain $\langle k^-(\1) f, P_t g\rangle = \langle f, \mathcal A P_t g \rangle$. Therefore,
	\[
		\langle k^-(\1) f, P_t g\rangle = \langle f, \mathcal A P_t g \rangle = \langle f, P_t \mathcal A g\rangle = \bigl\langle P_t f, k^+(\1) g\bigr\rangle
	\]
	for all non-negative $f,g\in \mathcal D$. 	Taking complex conjugates and switching $f$ and $g$, we get \smash{$\bigl\langle k^+(\1) f, P_t g \bigr\rangle = \langle P_t f, k^-(\1) g\rangle $} and then, by linear combination,
	\[
		\biggl \langle \frac 1{\mathrm i} \bigl(\xi k^+(\1) - \overline{\xi} k^-(\1)\bigr) f, P_t g\biggr \rangle =
		\biggl \langle P_t f, \frac 1{\mathrm i} \bigl(\xi k^+(\1) - \overline{\xi} k^-(\1)\bigr) g\biggr \rangle
	\]
	for all $f,g\in \mathcal D$. By Lemma~\ref{lem:closure}, the operator $- \mathrm {i}\bigl(\xi k^+(\1) - \overline{\xi} k^-(\1) \bigr)$ with domain $\mathcal D$ is closable and its closure $A$ is self-adjoint. 	
	Taking limits in the above equation, we obtain
	\[
		\langle A f, P_t g \rangle = \langle P_t f, A g\rangle
	\]
	for all $f$, $g$ in the domain of $A$. In particular, for fixed $g$ in the domain of $\mathcal A$, and all $f\in \mathcal D(A)$, we have $\langle A f, P_t g\rangle = \langle f, P_t A g \rangle$, hence $P_tg$ is in the domain of $A^*$ and $A^* P_t g = P_t A g$.
	
	As $A$ is self-adjoint, we conclude that $P_t$ leaves the domain of $A$ invariant and $P_t A = A P_t$ on the domain of $A$.
It follows that $P_t$ commutes with all spectral projections of $A$ \cite[Proposition~5.26]{schmudgen2012unbounded} and then, by spectral calculus, $P_t \exp(\mathrm i A) = \exp(\mathrm i A) P_t$. 	
\end{proof}

\section[Unitary operator vs.\ Meixner polynomials. Proof of Theorem~\ref{thm:unitary-meixner}]{Unitary operator vs.\ Meixner polynomials.\\ Proof of Theorem~\ref{thm:unitary-meixner}}
\label{sec:generating}

Here we prove Theorem~\ref{thm:unitary-meixner}. Our proof is similar in spirit to the proof with generating functions from Carinci et al.~\cite{carinci2019orthogonal}, see in particular Proposition~\ref{prop:meixner} below. In addition, we point out a~connection with the representation of the group ${\rm SU}(1,1)$ with the M{\"o}bius transform exploited by Bargmann~\cite[Section 9]{bargmann1947}.

For measurable $z\colon E\to \C$ with $\sup |z|<1$, we define the \emph{exponential state}
\[
	\mathcal E_z(0) = 1, \qquad \mathcal E_z\bigl(\delta_{x_1}+\cdots + \delta_{x_n}\bigr) = \frac{1}{\sqrt p^n} \, z(x_1)\cdots z(x_n).
\]
The exponential state owes its name to the relation $\mathcal E_z = \exp\bigl( k^+(z)\bigr)\1_{\{0\}}$, compare Meyer \mbox{\cite[Sec\-tion IV.1]{meyer2006quantum}}. The state has norm
\[
	\|\mathcal E_z\|^2 = (1-p)^{\alpha(E)} \exp\biggl( - \int \log \bigl(1-|z|^2\bigr) \dd \alpha \biggr),
\]
following analogous arguments as in \cite[equation~(5.7)]{IntertwiningConsistent}. For $\xi\in \C$, consider the matrix
\[
	A(\xi) = \exp\biggl( \begin{pmatrix} 0 & \mathrm i \xi \\ - \mathrm i \overline{\xi} &0 \end{pmatrix}\biggr)
	= \begin{pmatrix} \cosh |\xi| & \mathrm i \dfrac{\xi}{|\xi|} \sinh |\xi| \\
	- \mathrm i \dfrac{\overline{\xi}}{|\xi|} \sinh |\xi| & \cosh |\xi|\end{pmatrix}.
\]
The matrix is in ${\rm SU}(1,1)$, the group of $2\times 2$ matrices of the form
\[
	A = \begin{pmatrix} a & b \\ \overline b & \overline a \end{pmatrix}
\]
with $a,b\in \C$ and $|a|^2 - |b|^2 = 1$. It is known that ${\rm SU}(1,1)$ acts on the open complex unit disk $\{z\colon|z|<1\}$ by the M{\"o}bius transform
\[
	\phi_A(z) = \frac{a z + b}{\overline b z + \overline a}.
\]
For a function $z$ from $E$ to the open unit disk, we define a new function $z_\xi$ by
\[
	z_\xi(x) = \frac 1 {\mathrm i} \phi_{A(\xi)}\bigl(\mathrm i z(x)\bigr)
	= \frac{z(x) + \frac{\xi}{|\xi|} \tanh |\xi|}{1+ z(x) \frac{\overline{\xi}}{|\xi|} \tanh |\xi| }
\]
and a scalar $C(\xi) \in \mathbb C$ by
\[
	C(\xi) = \exp\biggl( - \int \log\left( \cosh |\xi| + z(y) \frac{\overline \xi}{\xi} \sinh |\xi|\right) \alpha(\dd y) \biggr).
\]
The logarithm is the principal branch of the complex logarithm on $\C \setminus (-\infty,0]$, i.e., $\log z = \log |z| +\mathrm i \theta$ with $\theta\in (-\pi,\pi)$ an argument of $z$. More concretely, in view of $|z(y)|<1$ and $\tanh|\xi|<1$,
\[
	\log\left( \cosh |\xi| + z(y) \frac{\overline \xi}{\xi} \sinh |\xi|\right)
	= \log (\cosh|\xi|) + \log \left( 1 + z(y) \frac{\overline \xi}{\xi} \tanh |\xi|\right)
\]
with $\log(1+u) = \sum_{n=1}^\infty (-1)^{n-1} u^n/n$.

The scalar $C(\xi)$ is similar to multipliers introduced by Bargmann when he constructed group actions ${\rm SU}(1,1)$ on spaces of functions from the complex unit disk to $\C$ \cite[Section 9]{bargmann1947}.

\begin{Proposition} \label{prop:expo-trafo}
	Let $\xi \in \C$ and $z\colon\E\to \C$ be measurable with $\sup |z|<1$.
	Then
	\[
		\exp\bigl( \xi k^+(\1) - \overline \xi k^-(\1)\bigr) \mathcal E_z = C(\xi) \mathcal E_{z_\xi}.
	\]
\end{Proposition}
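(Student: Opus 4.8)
The plan is to realise $U=\exp\bigl(\xi k^+(\1)-\overline\xi k^-(\1)\bigr)$ as the time-one map of a unitary flow and to match that flow against an explicit ansatz built from exponential states. Write $k^\pm:=k^\pm(\1)$ and let $A$ be the self-adjoint closure of $\tfrac1{\mathrm i}(\xi k^+-\overline\xi k^-)$ supplied by Lemma~\ref{lem:closure}, so that $U=\exp(\mathrm i A)$ and $\mathrm i A=\xi k^+-\overline\xi k^-$ on the core $\mathcal D$. Since $\sup|z|<1$, the norm formula for $\mathcal E_z$ shows $\mathcal E_z\in\mathcal H$; moreover an estimate analogous to the one in the proof of Lemma~\ref{lem:closure} shows that every exponential state $\mathcal E_w$ with $\sup|w|<1$ is an \emph{analytic vector} for $A$, hence lies in $\mathcal D(A)$. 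I then set
\[
	\Psi(t)=\exp\bigl(t(\xi k^+-\overline\xi k^-)\bigr)\mathcal E_z,\qquad t\in[0,1],
\]
and seek a scalar $C(t)$ and a function $w(t)\colon E\to\C$ with $C(0)=1$, $w(0)=z$ and $\Psi(t)=C(t)\mathcal E_{w(t)}$; the proposition is the case $t=1$.

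The heart of the matter is the action of the generator on a general exponential state. Evaluating on an $n$-particle configuration $\eta=\delta_{x_1}+\dots+\delta_{x_n}$ gives
\[
	k^+\mathcal E_w=D_{\1}\mathcal E_w,\qquad k^-\mathcal E_w=\bigl(\textstyle\int w\,\dd\alpha\bigr)\mathcal E_w+D_{w^2}\mathcal E_w,
\]
where $D_h\mathcal E_w:=\tfrac{\dd}{\dd\eps}\mathcal E_{w+\eps h}\big|_{\eps=0}$ is the derivative in the parameter and $w^2$ the pointwise square; the first identity follows from $k^+\mathcal E_w(\eta)=\tfrac1{\sqrt p}\sum_i\mathcal E_w(\eta-\delta_{x_i})$ and the second from $\mathcal E_w(\eta+\delta_x)=\tfrac1{\sqrt p}w(x)\mathcal E_w(\eta)$ with the definition of $k^-$. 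By linearity of $h\mapsto D_h\mathcal E_w$,
\[
	(\xi k^+-\overline\xi k^-)\mathcal E_w=D_{\xi\1-\overline\xi w^2}\mathcal E_w-\overline\xi\bigl(\textstyle\int w\,\dd\alpha\bigr)\mathcal E_w.
\]
Inserting the ansatz $C\mathcal E_w$, using $\tfrac{\dd}{\dd t}\mathcal E_{w(t)}=D_{\dot w}\mathcal E_{w(t)}$, and matching the parameter-derivative term with the leftover multiple of $\mathcal E_w$ (the two are separated by evaluating on one- and zero-particle configurations), I am led to the pointwise Riccati equation and the scalar equation
\[
	\dot w(t,x)=\xi-\overline\xi\,w(t,x)^2,\qquad \frac{\dot C(t)}{C(t)}=-\overline\xi\int w(t)\,\dd\alpha .
\]
With these choices $C(t)\mathcal E_{w(t)}$ solves the same equation $\tfrac{\dd}{\dd t}\Phi=\mathrm i A\Phi$ as $\Psi$ with the same initial value $\mathcal E_z$; since $A$ is self-adjoint, $\tfrac{\dd}{\dd t}\|\Psi-C\mathcal E_w\|^2=2\Re\langle \mathrm i A(\Psi-C\mathcal E_w),\Psi-C\mathcal E_w\rangle=0$, so the two curves coincide.

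It then remains to solve the two ordinary differential equations and read off the limit. The Riccati equation linearises through $v:=\mathrm i w=u_1/u_2$, where
\[
	\dot u=Mu,\qquad M=\begin{pmatrix} 0 & \mathrm i\xi \\ -\mathrm i\overline\xi & 0 \end{pmatrix},
\]
is exactly the matrix exponentiated in the definition of $A(\xi)$. As $\exp(M)=A(\xi)$, the induced Möbius action gives $v(1)=\phi_{A(\xi)}(\mathrm i z)$, that is $w(1)=\tfrac1{\mathrm i}\phi_{A(\xi)}(\mathrm i z)=z_\xi$. For the multiplier, the relation $\dot u_2=-\mathrm i\overline\xi u_1$ turns $-\overline\xi w=-\tfrac{\dd}{\dd t}\log u_2$ into a perfect derivative, so integrating the $C$-equation over $[0,1]$ gives $C(1)=\exp\bigl(-\int\log u_2(1,y)\,\alpha(\dd y)\bigr)$, where $u_2(1,y)$ is the lower component of $A(\xi)\,(\mathrm i z(y),1)^{\top}$; this is precisely $C(\xi)$.

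The main obstacle I anticipate is functional-analytic rather than algebraic: one must justify that the explicit curve $t\mapsto C(t)\mathcal E_{w(t)}$ stays in $\mathcal D(A)$ and is differentiable as an $\mathcal H$-valued map, and that the pointwise identities above upgrade to genuine identities in $\mathcal H$. Both hinge on the uniform bound $\sup_{t\in[0,1]}\sup_x|w(t,x)|<1$, which holds because $\{A(t\xi):t\in[0,1]\}$ is a compact subset of $\mathrm{SU}(1,1)$ and each associated Möbius map carries the compact disk $\{|u|\le\sup|z|\}$ into the open unit disk, so all images lie in some $\{|u|\le r\}$ with $r<1$. Given this bound, the difference quotients of $t\mapsto\mathcal E_{w(t)}$ are dominated configuration-wise by summable multiples of the Pascal weights, so dominated convergence promotes the pointwise derivative to an $\mathcal H$-limit and legitimises the uniqueness argument.
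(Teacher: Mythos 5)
Your proposal is correct and follows essentially the same route as the paper's proof: both realise $U$ as the time-one map of the unitary group generated by the self-adjoint closure $A$ from Lemma~\ref{lem:closure}, posit the ansatz $C(t)\,\mathcal E_{w(t)}$, arrive at the Riccati equation $\dot w=\xi-\overline\xi\, w^2$ together with $\dot C/C=-\overline\xi\int w\,\dd\alpha$, control $\sup_{t}\|w(t)\|_\infty<1$ by the same M\"obius-compactness argument, and upgrade the configuration-wise computations to norm-differentiability and membership in the domain of the closure before concluding by uniqueness for the abstract evolution equation. The only difference is presentational: the paper verifies the explicitly given $z_t$ and $C(t)$ (writing out the real-$\xi$ case and noting the general case is similar), whereas you derive them for arbitrary complex $\xi$ by linearising the Riccati flow through $\dot u=Mu$ with $\exp(M)=A(\xi)$, which cleanly explains the appearance of the M\"obius transform and the Bargmann-type multiplier.
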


\begin{proof}
	To lighten notation we write down the proof for real-valued $\xi$ only, the general case is similar. Thus let $\xi = t\in \R$. Set $f_t:= C(t) \mathcal E_{z_t}$. We show that (i) $t\mapsto f_t$ is norm differentiable, (ii) $ f_t$ is in the domain of the closure $A$ of $\frac 1{\mathrm i } \bigl(k^+(\1) - k^-(\1)\bigr)$, and (iii) $\partial_t f_t = \mathrm i A f_t$. The key equations are~\eqref{eq:ctdiff}, \eqref{eq:ezdiff}, \eqref{eq:ftdiff}, and~\eqref{eq:kpmft}. Items (i)--(iii) imply $f_t = \exp(\mathrm i t A) f_0$ for all $t\in\R$. The proposition follows upon specializing to $t = \xi$.

(i) The map $t\mapsto z_t$ is differentiable,
\[
	\partial_t z_t(x) = \frac{\bigl(1-z_0(x)^2\bigr) \bigl(1- \tanh(t)^2\bigr)}{(1+z_0(x)\tanh t)^2} = 1- z_t(x)^2
\]
pointwise for all $x$ and actually in supremum norm
\[
	\lim_{h\to 0} \biggl\| \frac{1}{h} (z_{t+h} - z_t) - (1- z_t)^2 \biggr\|_\infty =0.
\]
The multiplier $C(t)$ is differentiable with derivative
\begin{align}
	C'(t) &= - C(t) \frac{\dd}{\dd t}\int ( \log (\cosh t) + \log (1+z_0(y)\tanh t)) \alpha (\dd y) \nonumber\\
	 &= - C(t) \int z_t(y) \alpha(\dd y).\label{eq:ctdiff}
\end{align}
To prove that the map $t\mapsto \mathcal E_{z_t}$ is differentiable, we express $\mathcal E_{z_{t+h}} - \mathcal E_{z_t}$ with $t,h\in \R$ in terms of $v_{t,h}:= z_{t+h} - z_t$. For the empty configuration $\eta=0$, we note that both $\mathcal E_{z_{t+h}}(0)$ and $\mathcal E_{z_t}(0)$ are equal to $1$ hence the difference vanishes. For single-particle configurations $\eta=\delta_{x_1}$, we have $\mathcal E_{z_{t+h}} (\delta_{x_1}) - \mathcal E_{z_{t}} (\delta_{x_1}) = v_{t,h}(x_1)$.
For $n\geq 2$, we have
\begin{align}
	&\bigl(\mathcal E_{z_{t+h}} - \mathcal E_{z_t}\bigr) (\delta_{x_1}+\cdots+\delta_{x_n}) \nonumber\\
	&	\qquad {} = \frac{1}{{\sqrt p}^n} \Biggl( \prod_{i=1}^n \bigl(z_t(x_i)+ v_{t,h}(x_i)\bigr) - \prod_{i=1}^n z_t(x_i)\Biggr) \nonumber \\
	& \qquad {} = \frac{1}{{\sqrt p}^n} \sum_{i=1}^n v_{t,h}(x_i) \prod_{j \neq i} z_t(x_j) + \frac{1}{{\sqrt p}^n} \sum_{\substack{I\subset [n]:\\ |I|\geq 2}} \prod_{i\in I} v_{t,h}(x_i) \prod_{j\notin I} z_t(x_j). \label{eq:rev2}
\end{align}
We define a function $Q_{t,h}\colon \mathbf N_{<\infty}\to \C$ as follows: If $\eta(E)\leq 1$, then $Q_{t,h}(\eta):=0$; if $\eta(E) \geq 2$, write $\eta = \delta_{x_1}+\cdots+\delta_{x_n}$ with $n\geq 2$ and $x_1,\ldots,x_n\in E$ and set
\[
	Q_{t,h}(\delta_{x_1}+\cdots + \delta_{x_n}) := \frac{1}{{\sqrt p}^n} \sum_{\substack{I\subset [n]:\\ |I|\geq 2}} \prod_{i\in I} v_{t,h}(x_i) \prod_{j\notin I} z_t(x_j)
\]
(this is precisely the second term in~\eqref{eq:rev2}). Let us check that $\|Q_{t,h}\| = O\bigl(h^2\bigr)$ as $h\to 0$. At fixed number of particles $n\geq 2$, $Q_{t,h}$ is bounded by
\[
	\sup_{\eta\colon   \eta(E) = n} |Q_{t,h}(\eta)| \leq \frac{1}{{\sqrt p}^{n}} \binom{n}{2} \|v_{t,h}\|_\infty^2 \|z_t\|^{n-2}.
\]
Therefore, the $L^2$-norm satisfies
\[
	\|Q_{t,h}\|^2 \leq \sum_{n\geq 2} \rho_{p,\alpha}(\eta(E) = n) \sup_{\eta\colon   \eta(E) = n}|Q_{t,h}(\eta)|^2
	 \leq \kappa \|v_{t,h}\|_\infty^4
\]
with
\[
	\kappa = (1-p)^{\alpha(E)} \sum_{n\geq 2} \frac{(\alpha(E))_n}{n!} n^4 \|z_t\|_\infty^{2n-4}.
\]
For proving $\kappa <\infty$ it is enough to show that $\|z_t\|_\infty <1$. By definition of $z_t$, the set $\{z_t(x)\colon {x\in E}\}$ is contained in the image of the disk $D_0:=\{\zeta \in \mathbb C\colon |\zeta|\leq \|z_0\|_\infty\}$ under the M{\"o}bius transform $\zeta \mapsto (\zeta + \tanh t)/(1+ \zeta \tanh t)$. It is known that the M{\"o}bius transform maps the open unit disk to itself, and it maps compact sets to compact sets because it is continuous. Furthermore, $D_0$ is compact and contained in the open unit disk (because $\|z_0\|_\infty <1$). Therefore, the M{\"o}bius transform maps $D_0$ to a compact subset $K_t$ of the open unit disk. In particular,~$K_t$ stays away from the boundary of the unit disk and we deduce $\|z_t\|_\infty <1$, and then $\kappa <\infty$.

The differentiability of $t\mapsto z_t$ in supremum norm yields $\|v_{t,h}\|_\infty = \|z_{t+h} - z_t\|_\infty = O\bigl(h^2\bigr)$. Altogether $\|Q_{t,h}\| \leq \sqrt \kappa \|v_{t,h}\|_\infty^2 =O\bigl(h^2\bigr)$.

 A similar bound based on $v_{t,h} = h (1-z_t)^2 + O\bigl(h^2\bigr)$ shows that the linear (in $v_{t,h}$) term is
\[
	h\, \frac{1}{{\sqrt p}^n}\sum_{i=1}^n (1-z_t(x_i))^2 \prod_{j\neq i} z_t(x_j) + O\bigl(h^2\bigr).
\]
It follows that $t\mapsto \mathcal E_{z_t}$ is norm-differentiable with derivative
\begin{equation} \label{eq:ezdiff}
	\partial_t \mathcal E_{z_t}(\delta_{x_1}+\cdots +\delta_{x_n})
	= \frac{1}{\sqrt p} \sum_{i=1}^n \bigl(1-z_t(x_i)^2\bigr) \prod_{j\neq i } \frac{z_t(x_j)}{\sqrt p}
\end{equation}
for $n\geq 1$ and $\partial_t\mathcal E_{z_t}(0) =1$. The map $t\mapsto f_t$ is norm-differentiable with derivative
\begin{equation}\label{eq:ftdiff}
	\partial_t f_t = C'(t) \mathcal E_{z_t}+ C(t) \partial_t \mathcal E_{z_t}.
\end{equation}

(ii) We show that every exponential vector $\mathcal E_z$, $\|z\|_\infty <1$, is in the domain of the closure of $k^+(\1) - k^-(\1)$. Set $\psi_0 = \1_{\{0\}}$ and
\[
\psi_n(\delta_{x_1}+\cdots +\delta_{x_m}) = \delta_{n,m} \prod_{i=1}^m z(x_i)/\sqrt p
\]
 so that $\mathcal E_z = \sum_{n=0}^\infty \psi_n$. Every $\psi_n$ is in $\mathcal D$. Furthermore,
\begin{gather*}
	k^+(\1) \psi_n (\delta_{x_1}+\cdots +\delta_{x_m}) = \delta_{m,n+1}
		\frac 1 {\sqrt p} \sum_{i=1}^m \prod_{j\neq i} \frac{z(x_j)}{\sqrt p},
\\
	\sum_{n=0}^\infty \bigl\|k^+(\1) \psi_n\bigr\|^2 \leq \E\bigl[ \1_{\{\eta(E)\geq 1\}} p^{- \eta(E)} \eta(E)^2 \|z\|_\infty^{2(\eta(E) - 1)}\bigr]< \infty.
\end{gather*}
It follows that $k^+(\1) \sum_{n=0}^N \psi_n$ converges in norm. Turning to $k^-(\1)$, we note
\begin{align*}
	k^-(\1) \psi_n(\delta_{x_1}+\cdots +\delta_{x_m}) &{} = \delta_{m,n-1}\biggl( \int z \dd \alpha+ \sum_{i=1}^m z(x_i)\biggr) \prod_{j=1}^m \frac{z(x_i)}{\sqrt p}\\
	&{} = \delta_{m,n-1} \biggl( \int z \dd\alpha\biggr)\prod_{j=1}^m \frac{z(x_j)}{\sqrt p}
	+ \delta_{m,n-1} \frac{1}{\sqrt p} \sum_{i=1}^m z(x_i)^2 \prod_{j\neq i} \frac{z(x_j)}{\sqrt p}
\end{align*}
and $\sum_{n=1}^\infty \|k^-(\1) \psi_n\|^2 <\infty$ as well. Therefore, the partial sums $S_N = \sum_{n=0}^N \psi_n$ are in $\mathcal D$, they converge in norm to $\mathcal E_z$, and $\bigl(k^+(\1)-k^-(\1)\bigr)S_N$ converges in norm to \smash{$\sum_n \bigl(k^+(\1)-k^-(\1)\bigr)\psi_n$}. It follows that $\mathcal E_z$ is in the domain of the closure of \smash{$\bigl(k^+(\1) - k^-(\1)\bigr)$} and
\begin{align}
&	\bigl(k^+(\1)-k^-(\1)\bigr)^\mathrm{cl} \mathcal E_z(\delta_{x_1}+\cdots + \delta_{x_n})\nonumber\\
& \qquad{}	= - \biggl(\int z \dd \alpha\biggr) \mathcal E_z(\delta_{x_1}+\cdots + \delta_{x_n})
	 + \frac{1}{\sqrt p} \sum_{i=1}^n \bigl( 1- z(x_i)^2\bigr) \prod_{j\neq i} \frac{z(x_j)}{\sqrt p}.\label{eq:kpmft}
\end{align}
The function $f_t = C(t) \mathcal E_{z_t}$ is in the domain of the closure too.

(iii) Comparing the previous equation with~\eqref{eq:ftdiff}, \eqref{eq:ezdiff}, and~\eqref{eq:ctdiff}, we get $\partial_t f_t = \bigl(k^+(\1) - \smash{k^-(\1)\bigr)^\mathrm{cl}} f_t = \mathrm i A f_t$. The proof is complete.
\end{proof}

\begin{Remark}
 The proof can be concluded using an explicit formula given in \cite[Proposition~3.1]{lytvynov2003JFA} for generating functionals $\mathcal G_\varphi\colon \mathbf N_{<\infty}\to \C$ defined by
 \[
 	\mathcal G_\varphi(\eta)= 1+\sum_{n=1}^\infty \frac{1}{n!} (1-p)^n I_n(\varphi^{\otimes n})(\eta)
 \]
 for measurable $\varphi\colon E\to \C$ with $\sup |\varphi|<1$:
\[
	\mathcal G_\varphi(\eta) = \exp\biggl( - \int\log (1+ p \varphi) \dd \alpha + \int \log \biggl( \frac{1+ \varphi}{1+p\varphi}\biggr) \dd \eta\biggr).
\]
Applying Proposition~\ref{prop:expo-trafo} to exponential vectors $\mathcal E_{\varphi\sqrt p}$ with $\varphi$ an arbitrary bounded measurable function, we find that $(1-p)^{-\alpha(E)/2} \exp\bigl(\xi k^+(\1) - \overline{\xi} k^-(\1)\bigr)$ maps the function $f\colon\mathbf N_{<\infty}\to \C$ given by
\[
	f(0) =1,\qquad f(\delta_{x_1}+\cdots +\delta_{x_n}) = \prod_{i=1}^n \varphi(x_i)
\]
to $\mathcal G_\varphi$, from which we deduce that Theorem~\ref{thm:unitary-meixner} holds true when $f_n = \varphi^{\otimes n}$, much in the same way we prove Proposition~\ref{prop:meixner} below. The general statement follows with polarization formulas and (multi)linearity, and density arguments--notice that $(f_1,\ldots,f_n)\mapsto I_n(f_1\otimes_\mathrm s\cdots \otimes_\mathrm s f_n)$ is a~symmetric multilinear mapping and as such, uniquely determined by its value for $f_1=\cdots = f_n$.

For convenience of the reader, we provide a self-contained proof that does not require adaption of previously proven statements to our setup (such as \cite[Proposition~3.1]{lytvynov2003JFA} which uses the machinery of Jacobi fields and distribution theory).
\end{Remark}

We start by applying Proposition~\ref{prop:expo-trafo} to exponential vectors associated with indicators.

\begin{Corollary} \label{cor:u}
	Let $\xi>0$ with $\tanh \xi = \sqrt p$. Then, for every $B\in \mathcal E$ and $s\in (-1,1)$, the unitary $U = \exp\bigl(\xi k^+(\1)- \xi k^-(\1)\bigr)$ maps the function $f_s\colon \mathbf N_{<\infty}\to \R$ given by
	\[
		f_s(0) =1,\qquad f_s(\delta_{x_1}+\cdots + \delta_{x_n}) = s^n \prod_{i=1}^n \1_B(x_i)
	\]
	to
	\[
		Uf_s(\eta) = (1-p)^{\alpha(E)/2} ( 1 + p s )^{-\alpha (B)} \biggl( \frac{1+s}{1+ps}\biggr)^{\eta(B)}.
	\]
\end{Corollary}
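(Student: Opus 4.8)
The plan is to recognize $f_s$ as one of the exponential states of Proposition~\ref{prop:expo-trafo} and then read off the image. Taking $z = \sqrt{p}\,s\,\1_B$, the defining formula for the exponential state gives $\mathcal{E}_z(\delta_{x_1}+\cdots+\delta_{x_n}) = p^{-n/2}\prod_{i=1}^n \sqrt{p}\,s\,\1_B(x_i) = s^n\prod_{i=1}^n \1_B(x_i)$, so that $f_s = \mathcal{E}_z$. The hypothesis $\sup|z| = \sqrt{p}\,|s| < 1$ holds since $|s|<1$ and $p\in(0,1)$, hence Proposition~\ref{prop:expo-trafo} applies and yields $U f_s = C(\xi)\,\mathcal{E}_{z_\xi}$.

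It then remains to evaluate $z_\xi$, $\mathcal{E}_{z_\xi}$, and $C(\xi)$ for the present choice of parameters. First I would record the identities forced by $\tanh\xi=\sqrt p$ with $\xi>0$: from $\cosh^2\xi-\sinh^2\xi=1$ one gets $\cosh\xi=(1-p)^{-1/2}$ and $\sinh\xi=\sqrt{p}\,(1-p)^{-1/2}$. Since $\xi$ is real and positive we have $\xi/|\xi| = \overline\xi/\xi = 1$, so the M\"obius-type formula for $z_\xi$ reduces to $z_\xi(x) = (z(x)+\sqrt p)/(1+\sqrt p\,z(x))$.

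Because $z=\sqrt p\,s\,\1_B$ takes only the values $\sqrt p\,s$ on $B$ and $0$ off $B$, this gives $z_\xi(x)=\sqrt p\,(1+s)/(1+ps)$ for $x\in B$ and $z_\xi(x)=\sqrt p$ for $x\notin B$. Splitting a configuration $\eta=\delta_{x_1}+\cdots+\delta_{x_n}$ into its $\eta(B)$ points in $B$ and its $n-\eta(B)$ points outside, the product $\prod_{i=1}^n z_\xi(x_i)$ equals $p^{n/2}\bigl((1+s)/(1+ps)\bigr)^{\eta(B)}$, whence $\mathcal{E}_{z_\xi}(\eta)=p^{-n/2}\prod_i z_\xi(x_i)=\bigl((1+s)/(1+ps)\bigr)^{\eta(B)}$. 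For the multiplier I would substitute $z=\sqrt p\,s\,\1_B$ into $C(\xi)$; using $\cosh\xi+\sqrt p\,s\,\sinh\xi = (1-p)^{-1/2}(1+ps)$ on $B$ and $\cosh\xi=(1-p)^{-1/2}$ off $B$, the integral defining $C(\xi)$ separates over $B$ and $E\setminus B$ and collapses to $C(\xi)=(1-p)^{\alpha(E)/2}(1+ps)^{-\alpha(B)}$. Multiplying $C(\xi)$ by $\mathcal{E}_{z_\xi}(\eta)$ gives the claimed identity, and the value at $\eta=0$ is recovered from $\mathcal{E}_{z_\xi}(0)=1$.

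There is no genuine obstacle beyond careful bookkeeping: the statement is a direct specialization of Proposition~\ref{prop:expo-trafo}, and the only points requiring attention are splitting both the product $\prod_i z_\xi(x_i)$ and the $\alpha$-integral in $C(\xi)$ according to whether a point lies in $B$, and using the two hyperbolic identities forced by $\tanh\xi=\sqrt p$ to reduce $z_\xi$ and $C(\xi)$ to the stated rational expressions in $p$ and $s$.
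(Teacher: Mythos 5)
Your proposal is correct and follows essentially the same route as the paper's own proof: identify $f_s$ with the exponential state $\mathcal{E}_{s\sqrt{p}\,\1_B}$, apply Proposition~\ref{prop:expo-trafo}, and evaluate $z_\xi$ and $C(\xi)$ using $\cosh\xi=(1-p)^{-1/2}$ and $\tanh\xi=\sqrt{p}$, splitting over $B$ and $E\setminus B$. Your write-up is in fact slightly more explicit than the paper's (recording the hyperbolic identities and the verification $\sup|z|<1$), but there is no substantive difference in the argument.
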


\begin{proof}
	The function $f_s$ is the exponential state associated with $z(x) = s \sqrt p \1_B(x)$. The transformed $z$ is
	\[
		z_\xi(x) =\frac{s\sqrt p \1_B(x) + \sqrt p}{1+s p \1_B(x)} = \sqrt p\, \frac{1+s \1_B (x)}{1+ps\1_B (x)}
	\]
	and the associated exponential state is
	\[
		\mathcal E_{z_\xi}(\delta_{x_1}+\cdots +\delta_{x_n})
		= \prod_{i=1}^n \biggl( \frac{1+s \1_B (x_i)}{1+ps\1_B (x_i)}\biggr)^n
	\] 	
	or equivalently,
	\[
		\mathcal E_{z_\xi}(\eta)
		= \biggl( \frac{1+s}{1+ps}\biggr)^{\eta(B)}.
	\] 	
	The multiplier is
	\begin{align*}
	 C(\xi) = (\cosh \xi)^{-\alpha(E)} \exp\biggl( - \int \log (1+ s\sqrt p \1_B \tanh \xi )\dd\alpha\biggr)
	 = (1- p)^{\alpha(E)/2} ( 1+ ps)^{-\alpha(B)}.
	\end{align*}
	In the last line we have used $(\cosh \xi)^2 = 1/\bigl(1- (\tanh \xi)^2\bigr)$. We apply Proposition~\ref{prop:expo-trafo} and obtain the corollary.
\end{proof}

We remind 
that the univariate monic Meixner polynomials are denoted by $\mathcal{M}_n(x; \alpha, p)$, see Appendix~\ref{app:meixner} below.

\begin{Proposition} \label{prop:meixner}
	Fix $B\in \mathcal E$.
	The unitary $U$ maps the functions $f^{(n)}\colon \mathbf N_{<\infty}\to \R$, $n\in \N$, given by
	\[
		f^{(n)}(\delta_{x_1}+\cdots + \delta_{x_m}) = \begin{cases}
			\displaystyle \prod_{i=1}^m \1_B(x_i), & m =n,\\
			 0,& m\neq n
			\end{cases}
	\]
	to
	\[
		U f^{(n)} (\eta) = (1-p)^{\alpha(E)/2}\times \frac{1}{n!}\, (1-p)^n \mathcal M_n(\eta(B);\alpha(B),p).
	\]
\end{Proposition}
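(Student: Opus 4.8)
The plan is to identify $U f^{(n)}$ as the coefficient of $s^n$ in the generating-function identity supplied by Corollary~\ref{cor:u}. First I would note that the functions $f^{(n)}$ have pairwise disjoint supports and reassemble into the family $f_s$ from Corollary~\ref{cor:u}: for every $s\in(-1,1)$,
\[
 f_s = \sum_{n=0}^\infty s^n f^{(n)}.
\]
Since $f^{(n)}(\eta)$ equals $1$ exactly when $\eta(B)=n$ and $\eta(E\setminus B)=0$, the independence of $\eta(B)$ and $\eta(E\setminus B)$ under $\rho_{p,\alpha}$ together with the negative binomial law gives $\|f^{(n)}\|^2 = (1-p)^{\alpha(E)}\frac{p^n}{n!}(\alpha(B))_n$, and the $f^{(n)}$ are mutually orthogonal. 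Hence $\sum_n |s|^{2n}\|f^{(n)}\|^2 = (1-p)^{\alpha(E)}(1-p|s|^2)^{-\alpha(B)}<\infty$ for $|s|<1$, so the series converges in $\mathcal H$. Because $U$ is unitary, and in particular bounded, I may pass $U$ through the sum to obtain $U f_s = \sum_{n=0}^\infty s^n\, U f^{(n)}$ in $\mathcal H$; this realizes $s\mapsto U f_s$ as an $\mathcal H$-valued power series whose coefficients are the quantities to be computed.

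Second, I would expand the explicit formula of Corollary~\ref{cor:u} as a power series in $s$. Writing $x=\eta(B)$ and $a=\alpha(B)$ and using $(1+ps)^{-a}\bigl(\frac{1+s}{1+ps}\bigr)^{x}=(1+s)^x(1+ps)^{-a-x}$, the known generating function of the univariate Meixner polynomials (Appendix~\ref{app:meixner}), after converting from the standard to the monic normalization, yields
\[
 (1+ps)^{-a}\bigl(\frac{1+s}{1+ps}\bigr)^{x}
 = \sum_{n=0}^\infty \frac{(1-p)^n}{n!}\,\mathcal M_n(x;a,p)\, s^n
\]
for $|s|<1$. Multiplying by $(1-p)^{\alpha(E)/2}$ shows $U f_s(\eta)=\sum_{n\ge 0} s^n c_n(\eta)$ pointwise, where $c_n(\eta):=(1-p)^{\alpha(E)/2}\frac{1}{n!}(1-p)^n\mathcal M_n(\eta(B);\alpha(B),p)$ is precisely the asserted value of $U f^{(n)}$.

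Finally I would match the two expansions coefficientwise. Each $c_n$ depends on $\eta$ only through $\eta(B)$, so $\|c_n\|^2$ is read off from the $L^2$-norm of the monic Meixner polynomials against the negative binomial law (Appendix~\ref{app:meixner}); one checks $\sum_n |s|^{2n}\|c_n\|^2<\infty$ for $|s|<1$, so $\sum_n s^n c_n$ converges in $\mathcal H$. Its sum coincides $\rho_{p,\alpha}$-almost everywhere with the pointwise sum $U f_s$, hence $U f_s=\sum_n s^n c_n$ in $\mathcal H$ as well. Two $\mathcal H$-valued power series agreeing on $(-1,1)$ have identical coefficients, so $U f^{(n)}=c_n$ for all $n$, which is the claim.

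The genuinely delicate point will be this last step: legitimizing the extraction of $s^n$-coefficients. I plan to handle it by phrasing both sides as norm-convergent $\mathcal H$-valued power series and invoking uniqueness of their coefficients, the boundedness of $U$ being what allows the interchange of $U$ with the infinite sum. The remaining algebra—the passage between the standard and monic Meixner generating functions—is routine.
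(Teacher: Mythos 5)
Your proposal is correct and follows essentially the same route as the paper's own proof: decompose $f_s=\sum_n s^n f^{(n)}$ (with $f^{(0)}=\1_{\{0\}}$), pass $U$ through the sum by unitarity, recognize in Corollary~\ref{cor:u} the monic Meixner generating function~\eqref{eq:pmonic-gf}, verify norm convergence of both $\mathcal H$-valued series via orthogonality, and conclude by uniqueness of power-series coefficients. Your explicit norm computations ($\|f^{(n)}\|^2=(1-p)^{\alpha(E)}\frac{p^n}{n!}(\alpha(B))_n$ and the Meixner norms) match the paper's, and your care in identifying the $\mathcal H$-limit with the pointwise sum almost everywhere is exactly the step the paper handles the same way.
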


\begin{proof}
	Let $f^{(0)}:= \1_{\{0\}}$. For $s\in (-1,1)$, with the notation of Corollary~\ref{cor:u},
	\[
		f_s = \sum_{n=0}^\infty s^n f^{(n)}.
	\]
	The series converges in norm as the $f^{(n)}$'s are orthogonal and $\sum_{n=0}^\infty |s|^{2n} \bigl\|f^{(n)}\bigr\|^2<\infty$. As $U$ is unitary, hence continuous, we may exchange summation and the application of $U$ which gives
	\begin{equation} \label{eq:ufs1}
		U f_s = \sum_{n=0}^\infty s^n U f^{(n)}
	\end{equation}
	and the series on the right converges in norm. On the other hand, in the expression for $U f_s$ from Corollary~\ref{cor:u}, we recognize the generating function of Meixner polynomials with leading coefficient $(1-p)^n$:
	\begin{equation} \label{eq:ufs2}
		U f_s = (1-p)^{\alpha(E)/2} \sum_{n=0}^\infty \frac{s^n}{n!} (1-p)^n \mathcal M_n( \eta(B);\alpha(B),p),
	\end{equation}
	see equation~\eqref{eq:pmonic-gf}. The right side is not only absolutely convergent, pointwise for each $\eta\in \mathbf N_{<\infty}$, but also absolutely convergent in the Hilbert space $\mathcal H$. Indeed as $\eta\mapsto \eta(B)$, under $\rho_{p,\alpha}$ is a~negative binomial variable with parameters $\alpha(B)$ and $p$ (see equation~\eqref{eq:negbin}), the orthogonality relations for univariate Meixner polynomials recalled in Appendix~\ref{app:meixner} tell us that the summands are orthogonal and
	\[
		\frac{s^{2n}}{n!^2} (1-p)^{2n} \int|\mathcal M_n( \eta(B);\alpha(B),p)|^2 \rho_{p,\alpha}(\dd \eta)
		 = \frac{1}{n!} (\alpha(B))_n \bigl(p s^2\bigr)^{n}.
	\]
	The sum over $n\in \N_0$ of the right side is $\bigl(1-ps^2\bigr)^{\alpha(B)}$ and in particular, finite. Therefore, the series on the right side in~\eqref{eq:ufs2} converges in norm.
	
	Thus, we have two convergent series expansions for the analytic function $s\mapsto U f_s$ from $(-1,1)$ to $\mathcal H$. As the expansion coefficients are uniquely determined, the coefficients of $s^n$ in equations~\eqref{eq:ufs1} and~\eqref{eq:ufs2} must be the same. The proposition follows.
\end{proof}

The \emph{symmetric tensor product} of $f_1,\ldots, f_n\colon E\to \C$ is the function from $E^n$ to $\C$ given by
\[
	f_1\otimes_\mathrm s \cdots \otimes_\mathrm s f_n(x_1,\ldots,x_n) = \frac{1}{n!}\sum_{\sigma\in \mathfrak S_n} f_{\sigma(1)}(x_1)\cdots f_{\sigma(n)}(x_n).
\]
Above, $\mathfrak{S}_n$ denotes the set of permutations of the numbers $\set{1, \ldots, n}$.
Consider disjoint sets $B_1,\ldots, B_\ell$, integers $n_1,\ldots, n_\ell\in \N$ adding up to $\N$, and $f_n =\1_{B_1}^{\otimes_\mathrm s n_1} \otimes_\mathrm s \cdots \otimes_\mathrm s \1_{B_\ell}^{\otimes_\mathrm s n_\ell}$. The associated function $f$ is supported on configurations that have exactly $n_i$ particles in $B_i$, for each $i=1,\ldots,\ell$, and $n$ particles in total. On the relevant configurations there is an additional combinatorial contribution. One finds
\begin{equation}\label{eq:tenso-indic}
	f(\eta) = \frac{n_1!\cdots n_\ell!}{n!} \1_{\{\eta(E\setminus \bigcup_{i=1}^\ell B_i)=0\}} \prod_{i=1}^\ell\1_{\{\eta(B_i) = n_i\}}.
\end{equation}
Set $B_0:= E\setminus \bigl(\bigcup_{i=1}^\ell B_i\bigr)$, $n_0=0$, and
\begin{equation} \label{eq:fi-indic}
	f_i(\eta) = \1_{\{\eta(B_i)=n_i \}},\qquad i =0,\ldots,\ell.
\end{equation}
The next lemma says that $U$ maps the product of $f_i$'s to products of $U f_i$'s.

\begin{Lemma} \label{lem:tensorize}
	Let $\{B_0,\ldots, B_{\ell}\}$ be a set partition of $E$, $n_0,n_1,\ldots,n_\ell \in \N_0$, and $f_0,\ldots,f_\ell$ as in equation~\eqref{eq:fi-indic}. Then
	\[
		( U (f_0\cdots f_\ell)) (\eta) =\prod_{j=0}^\ell \frac{1}{n_j!} (1-p)^{n_j+ \alpha(B_j)/2 } \mathcal M_{n_j}( \eta(B_j);\alpha(B_j), p).
	\] 	
\end{Lemma}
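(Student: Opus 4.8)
The plan is to run the generating-function argument of Proposition~\ref{prop:meixner}, but now with one generating variable $s_j$ per block $B_j$ of the partition, and to read off the claim by comparing coefficients of the monomials $\prod_j s_j^{n_j}$. First I would fix parameters $s_0,\dots,s_\ell\in(-1,1)$ and form the single test function $z=\sqrt p\sum_{j=0}^\ell s_j\1_{B_j}$, which satisfies $\sup|z|=\sqrt p\,\max_j|s_j|<1$, so that the exponential state $\mathcal E_z$ is defined. Because the $B_j$ partition $E$, each particle lies in exactly one block and one computes $\mathcal E_z(\eta)=\prod_{j=0}^\ell s_j^{\eta(B_j)}$; grouping configurations by their occupation vector $\vec n=(\eta(B_0),\dots,\eta(B_\ell))$ gives the expansion
\[
	\mathcal E_z=\sum_{\vec n}\Bigl(\prod_{j=0}^\ell s_j^{n_j}\Bigr)\,\prod_{j=0}^\ell \1_{\{\eta(B_j)=n_j\}},
\]
whose $\vec n$-th summand is exactly $\prod_j s_j^{n_j}$ times the product $f_0\cdots f_\ell$ from~\eqref{eq:fi-indic}. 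Norm convergence follows from orthogonality of the summands together with the independence of the counting variables $\eta(B_j)$ and the negative binomial law~\eqref{eq:negbin}, exactly as in Proposition~\ref{prop:meixner}; since $U$ is unitary it commutes with the sum, so $U\mathcal E_z=\sum_{\vec n}\bigl(\prod_j s_j^{n_j}\bigr)\,U(f_0\cdots f_\ell)$.

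Next I would apply Proposition~\ref{prop:expo-trafo} to compute $U\mathcal E_z=C(\xi)\mathcal E_{z_\xi}$ directly. With $\xi>0$, $\tanh\xi=\sqrt p$, on the block $B_j$ one has $z_\xi=\sqrt p\,(1+s_j)/(1+ps_j)$ and $\cosh\xi+z\sinh\xi=\cosh\xi\,(1+ps_j)$, so both $\mathcal E_{z_\xi}$ and $C(\xi)$ factorize over the blocks; using $(\cosh\xi)^{-2}=1-p$ this yields
\[
	U\mathcal E_z(\eta)=\prod_{j=0}^\ell(1-p)^{\alpha(B_j)/2}(1+ps_j)^{-\alpha(B_j)}\Bigl(\frac{1+s_j}{1+ps_j}\Bigr)^{\eta(B_j)},
\]
which is simply the product over $j$ of the single-block expression in Corollary~\ref{cor:u}. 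Expanding each factor by the monic Meixner generating function~\eqref{eq:pmonic-gf}, each block contributes $(1-p)^{\alpha(B_j)/2}\sum_{n_j}(s_j^{n_j}/n_j!)(1-p)^{n_j}\mathcal M_{n_j}(\eta(B_j);\alpha(B_j),p)$, and multiplying these series out gives a second norm-convergent expansion of $U\mathcal E_z$ in the monomials $\prod_j s_j^{n_j}$ with $\vec n$-th coefficient $\prod_j\tfrac1{n_j!}(1-p)^{n_j+\alpha(B_j)/2}\mathcal M_{n_j}(\eta(B_j);\alpha(B_j),p)$.

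Finally I would equate the two expansions. Both are norm-convergent power series in $(s_0,\dots,s_\ell)$ on the polydisc $\{|s_j|<1\}$ with values in $\mathcal H$, hence define the same $\mathcal H$-valued real-analytic map, and their coefficients coincide. Matching the coefficient of $\prod_j s_j^{n_j}$ gives $U(f_0\cdots f_\ell)=\prod_j\tfrac1{n_j!}(1-p)^{n_j+\alpha(B_j)/2}\mathcal M_{n_j}(\eta(B_j);\alpha(B_j),p)$, as claimed. The main obstacle is purely bookkeeping: justifying the several-variable coefficient comparison, i.e.\ checking norm convergence of both multi-indexed series (which factorizes over $j$ into the one-variable estimates already used in Proposition~\ref{prop:meixner}) and invoking uniqueness of Taylor coefficients of an $\mathcal H$-valued analytic function; the orthogonality of the occupation-indicator family makes the coefficient extraction unambiguous. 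A conceptually cleaner but technically heavier alternative would be to establish the operator factorization $U=\bigotimes_j U_j$ from the commutation relations $[k^\pm(\1_{B_i}),k^\pm(\1_{B_j})]=0$ for $i\neq j$ (which hold because $\1_{B_i}\1_{B_j}=0$) together with the tensor decomposition of $\mathcal H$ induced by independence; I avoid this route here because it requires care with tensor products of unbounded self-adjoint operators.
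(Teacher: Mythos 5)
Your proof is correct, and it takes a genuinely different route from the paper's. The paper proves Lemma~\ref{lem:tensorize} by a factorization-of-dynamics argument: it writes $k^+(\1)-k^-(\1)=\sum_j\bigl(k^+(\1_{B_j})-k^-(\1_{B_j})\bigr)$, introduces auxiliary univariate operators $k_j^\pm$ on $\ell^2(\N_0,\nu_j)$ with unitary groups $U_j(t)$, and shows that the product $G_t(\eta)=\prod_j (U_j(t)g_j)(\eta(B_j))$ is norm-differentiable, lies in the domain of the closure of $k^+(\1)-k^-(\1)$, and solves the same abstract ODE as $V(t)G_0$, whence $G_t=V(t)G_0$; each factor $U_j(\xi)g_j$ is then identified with a univariate Meixner polynomial by applying Proposition~\ref{prop:meixner} to a singleton base space (or citing Carinci et al.\ directly). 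You instead stay entirely within the exponential-state machinery: choosing $z=\sqrt p\sum_j s_j\1_{B_j}$ piecewise constant on the partition, you run the generating-function argument of Corollary~\ref{cor:u} and Proposition~\ref{prop:meixner} with one parameter per block and extract the claim by multivariate coefficient comparison. Your block computations ($z_\xi=\sqrt p\,(1+s_j)/(1+ps_j)$ and $C(\xi)=\prod_j(1-p)^{\alpha(B_j)/2}(1+ps_j)^{-\alpha(B_j)}$, using $\alpha(E)=\sum_j\alpha(B_j)$) are correct, and the coefficient extraction is legitimate: both multi-series converge in norm by orthogonality (independence of the $\eta(B_j)$ plus Meixner orthogonality), and pairing with an arbitrary $g\in\mathcal H$ reduces uniqueness of coefficients to that of scalar absolutely convergent power series on the polydisc. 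What each approach buys: yours is analytically lighter downstream of Proposition~\ref{prop:expo-trafo} --- no new domain or differentiability verifications are needed (the paper asserts norm-differentiability of $t\mapsto G_t$ and domain membership only by analogy), and it subsumes Corollary~\ref{cor:u} and Proposition~\ref{prop:meixner} as the single-block case, at the cost of multi-index bookkeeping; the paper's route isolates a reusable structural principle (the unitary factorizes over blocks because the generator splits into commuting blockwise pieces), which is exactly the ``technically heavier alternative'' you mention at the end, though the paper implements it via ODE uniqueness rather than literal tensor products of unbounded operators. Your route is also close in spirit to the paper's own remark following Proposition~\ref{prop:expo-trafo} on generating functionals $\mathcal G_\varphi$, but your piecewise-constant choice of $z$ avoids the polarization and density arguments invoked there. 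Two cosmetic points: the notation $U(f_0\cdots f_\ell)$ inside your $\vec n$-sum should display its dependence on $\vec n$, and you should state explicitly that you only need Proposition~\ref{prop:expo-trafo} for real $\xi=\mathrm{artanh}\sqrt p$, consistent with the case the paper proves in detail.
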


\begin{proof}
	Let $V(t):= \exp\bigl( t \bigl(k^+(\1) - k^-(\1)\bigr)\bigr)$. Notice that
	\[
		k^+(\1) - k^-(\1) = \sum_{j=0}^\ell \bigl( k^+(\1_{B_j}) - k^-(\1_{B_j})\bigr).
	\]
	For $j=0,\ldots,\ell$, let $\nu_j$ be the negative binomial law on $\N_0$ with parameters $p$ and $\alpha(B_j)$. The space $d_0$ of sequences with at most finitely many non-zero entries is dense in $\ell^2(\N_0,\nu_j)$. Consider the operators $k_j^\pm\colon d_0\to \ell^2(\N_0,\nu_j)$ given by
	\[
		k_j^+ g_j(n) = \frac{1}{\sqrt p} n g_{j}(n -1),\qquad k_i^- g_i(n) = \sqrt{p} \bigl(\alpha(B_{i})+n\bigr) g_i(n+1).
	\]
	The operator $\frac{ 1} {\mathrm i} \bigl(k_j^+- k_j^-\bigr)$ is closable with self-adjoint closure, let $U_j(t) = \exp\bigl( t \bigl(k_j^+-k_j^-\bigr)\bigr)$, $t\in \R$ be the associated strongly continuous unitary group.
	For $g_0,\ldots, g_\ell \in d_0$ and $t\in \R$, set
	\[
		G_t(\eta) = \prod_{j=0}^\ell ( U_j(t) g_j)( \eta(B_j)).
	\]
	The map $t\mapsto G_t$ from $\R\to \mathcal H$ is norm-differentiable, $G_t$ is in the domain of the closure $L$ of $k^+(\1)-k^-(\1)$, and $\frac{\dd}{\dd t} G_t(\eta) = L G_t$. It follows that $G_t = V(t) G_0$. We apply the statement to $g_j(n) = \1_{\{n=n_j\}}$ and $t=\xi$ and deduce
	\begin{equation} \label{eq:tenso1}
		{\rm U}(f_0\cdots f_\ell) (\eta) = \prod_{j=0}^\ell ( U_j(\xi) g_j)( \eta(B_j)).
	\end{equation}
	Finally, we notice that each term on the right side is given by a univariate Meixner polynomial~as%
	\begin{equation} \label{eq:tenso2}
		( U_j(\xi) g_j)(m) = \frac{1}{n_j!}\, (1-p)^{n_j+\alpha(B_j)/2} \mathcal M_{n_j}(m;\alpha(B_j),p).
	\end{equation}
	This follows from Proposition~\ref{prop:meixner} applied to a set $E'$ that is a singleton $\{x_0\}$, to $B= \{x_0\}$, and the measure $\alpha'$ that gives mass $\alpha(B_j)$ to $x$. Alternatively, one may deduce the univariate equation~\eqref{eq:tenso2} directly from Theorem 3.1 in Carinci et al.~\cite{carinci2019orthogonal}.
	To conclude, we insert~\eqref{eq:tenso2} into~\eqref{eq:tenso1} and obtain the lemma.
\end{proof}

\begin{proof}[Proof of Theorem~\ref{thm:unitary-meixner}]
	Let $B_1,\ldots, B_\ell \in \mathcal E$ disjoint, $n_1,\ldots,n_\ell \in \N$ with $n_1+\cdots + n_\ell=n$, and $f\colon \mathbf N_{<\infty}\to \C$ the function with $f(\delta_{x_1}+\cdots + \delta_{x_n}) = \1_{B_1}^{\otimes_\mathrm s n_1}\otimes_{\mathrm s} \cdots \otimes_{\mathrm s} \1_{B_\ell}^{\otimes_\mathrm s n_\ell}(x_1,\ldots,x_n)$. Set $B_0 = E\setminus \bigl(\bigcup_{j=1}^\ell B_j\bigr)$ and $n_0=0$. Equation~\eqref{eq:tenso-indic} and Lemma~\ref{lem:tensorize} yield
	\begin{align*}
		U f &{}= \frac{n_1!\cdots n_\ell!}{n!}\, (1-p)^{\alpha(B_0)/2} \times \prod_{j=1}^\ell \frac{1}{n_j!} (1-p)^{n_j+\alpha(B_j)/2}\mathcal M_{n_j}( \eta(B_j);\alpha(B_j),p) \\
				&{}= (1-p)^{\alpha(E)/2} \times \frac{1}{n!}(1-p)^n \prod_{j=1}^\ell \mathcal M_{n_j}( \eta(B_j);\alpha(B_j),p) \\
				&{}= (1-p)^{\alpha(E)/2} \times \frac{1}{n!}(1-p)^n I_n(f_n).
	\end{align*}
	This proves the theorem for all functions $f$ of the form~\eqref{eq:tenso-indic} with disjoint $B_1,\ldots, B_\ell$. By linearity, the statement extends to all indicators of sets of the form
	\[
		\{\eta\colon \eta(E) = n\}\cap \Biggl( \bigcap_{j=1}^\ell \{\eta\colon \eta(B_j) = n_j\}\Biggr)
	\]
	with $\ell \in \N$, $n_1,\ldots,n_\ell\in \N_0$, and $B_1,\ldots, B_\ell \in \mathcal E$ (not necessarily disjoint). These sets form a~generating $\pi$-system of the $\sigma$-algebra $\mathcal N_n$ on $\mathbf N_n =\{\eta\colon\eta(E) =n\}$ that is the trace of $\mathcal N_{<\infty}$ on~$\mathbf N_n$.
	
	We conclude with a monotone class argument. Let $\mathcal V$ be the set of bounded functions $f\colon\mathbf N_\infty\to \C$ supported in $\mathbf N_n$ for which $U f = n!^{-1} (1-p)^{n+\alpha(E)/2} I_n(f_n)$. Clearly $\mathcal V$ is a~vector space. We check that it is closed under monotone pointwise limits of uniformly bounded sequences: Suppose $f^{(1)} \leq f^{(2)} \leq \ldots$ and $f^{(m)} \to f$ with $C = \sup_n\bigl\|f^{(m)}\bigr\|_\infty< \infty$ and $f^{(m)}\in \mathcal V$ for all $m$. Then, on the one hand $f^{(m)} \to f$ also in $L^2$-norm and $\bigl\| Uf^{(m)} - U f\bigr\| \to 0$ because $U$ is unitary. On the other hand 	
	\begin{align*}
		\bigl\|I_n\bigl(f^{(m)}- f\bigr)\bigr\|^2 & \leq \int \biggl| \int \bigl(f^{(m)}(\vect x) - f(\vect x) \bigr) \eta^n(\dd \vect x) \biggr|^2 \rho_{p,\alpha}(\dd \eta)
	\end{align*}
	because orthogonal projections onto a subspace decrease the norm. The integrand $f^{(m)} -f$ in the inner integral on the right side goes to zero pointwise on $E^n$ and is bounded by $2C$ with $\int 2 C \dd \eta^{n} \leq 2 C\eta(E)^n<\infty$, therefore by dominated convergence it goes to zero. All moments of the negative binomial variable $\eta\mapsto \eta(E)$ are finite, therefore dominated convergence applied to the outer integral yields $\bigl\|I_n\bigl(f^{(m)}\bigr) - I_n(f)\bigr\|\to 0$ as $m\to \infty$. Thus, we may pass to the limit on both sides of the equality \smash{$U f^{(m)} = n!^{-1} (1-p)^{n+\alpha(E)/2} I_n\bigl(f^{(m)}_n\bigr)$} and we deduce $f\in \mathcal V$.
	
	We have shown that $\mathcal V$ is a vector space that contains all indicators of a $\pi$-system that generates the $\sigma$-algebra and that it is stable under pointwise limits of uniformly bounded sequences. The functional monotone class theorem \cite[Theorem 2.12.9]{bogachev-vol1} implies that it contains all bounded measurable functions.
\end{proof}

As an alternative route to obtain the above results, one could skip exponential states and Propositions~\ref{prop:expo-trafo} and~\ref{prop:meixner} and start directly with Lemma~\ref{lem:tensorize}, deducing the crucial equation~\eqref{eq:tenso2} directly from~\cite[Theorem 3.1]{carinci2019orthogonal}.

\appendix

\section{Univariate Meixner polynomials} \label{app:meixner}
We recall a few facts about the Meixner polynomials from Koekoek, Lesky and Swarttouw \mbox{\cite[Section 9]{HypergeometricOrthogonalPolynomials}}. The Meixner polynomials with parameters $\alpha>0$ and $p\in (0,1)$ are given by
\begin{equation} \label{eq:meixner-def}
 M_n(x) = M_n(x; \alpha; p) := \sum_{k=0}^\infty \frac{(-x)_k (-n)_k}{(\alpha)_k} \frac{1}{k!} \biggl(1- \frac1p\biggr)^k,
\end{equation}
where $(a)_0 = 1$ and $(a)_k = a(a+1)\cdots (a+k-1)$.
They satisfy the symmetry $M_n(x) = M_x(n)$ and the orthogonality relation
\[
	\sum_{x=0}^\infty M_n(x) M_m(x) (1-p)^\alpha w_{p,\alpha}(x) = \delta_{n,m} \frac{1}{w_{p,\alpha}(n)},\qquad w_{p,\alpha}(n) = \frac{p^n}{n!} (\alpha)_n.
\] 	
The generating function is
\[
	G(t,x) = \sum_{n=0}^\infty \frac{(\alpha)_n}{n!} t^n M_n(x) = \biggl( 1- \frac t p \biggr)^x (1-t)^{-\alpha - x}.
\]
The Meixner polynomials from~\eqref{eq:meixner-def} do not have leading coefficient $1$, instead they are related to the monic Meixner polynomials $\mathcal M_n(x) = \mathcal M_n(x; \alpha, p)$ by
\begin{equation*} 
	M_n(x; \alpha, p) = \frac{1}{(\alpha)_n} \biggl( 1- \frac 1p\biggr)^n \mathcal M_n(x; \alpha, p)
\end{equation*}
from which we get
\begin{equation} \label{eq:pmonic-gf}
	\sum_{n=0}^\infty \frac{s^n}{n!}\,(1-p)^n \mathcal M_n(x) = G(-ps, x) = (1+s)^x (1+p s)^{-x-\alpha}.
\end{equation}
The formula enters the proof of Proposition~\ref{prop:meixner}.

\subsection*{Acknowledgements}

We thank the anonymous referees for their careful reading and helpful suggestions that helped improve the article.
S.F.\ acknowledges financial support from the Engineering and Physical Sciences Research Council of the United Kingdom through the EPSRC Early Career Fellowship EP/V027824/1 and from the Deutsche Forschungsgemeinschaft (DFG, German Research Foundation) under Germany’s Excellence Strategy – EXC-2047/1 – 390685813. S.J.\ and S.W.\ were supported under Germany's excellence strategy EXC-2111-390814868. S.F.\ and S.W.\ thank the Hausdorff Institute for Mathematics (Bonn) for its hospitality during the Junior Trimester Program \textit{Stochastic modelling in life sciences} funded by the Deutsche Forschungsgemeinschaft (DFG, German Research Foundation) under Germany’s Excellence Strategy - EXC-2047/1 - 390685813. The authors thank F. Redig for his insights at the beginning of the project. S.J.\ and S.W.\ thank T.~Kuna and E.~Lytvynov for helpful discussions.


\pdfbookmark[1]{References}{ref}
\LastPageEnding

\end{document}